\documentclass[11pt]{article}

\usepackage{amsmath}
\usepackage{amssymb}
\usepackage{amsthm}
\usepackage{url}
\usepackage{enumerate}
\usepackage[pdftex, hidelinks]{hyperref}

\title{A probabilistic model of X-ray computed tomography}
\author{
  Tyler Gomez\\
  University of Central Florida\\
  Orlando, FL\\
  \url{tgomez@ea.com}
    \and
  Jason Swanson\\
  University of Central Florida\\
  Orlando, FL\\
  \url{jason.swanson@ucf.edu}
    \and
  Alexandru Tamasan\\
  University of Central Florida\\
  Orlando, FL\\
  \url{tamasan@math.ucf.edu}
}

\newtheorem{thm}{Theorem}[section]
\newtheorem{cor}[thm]{Corollary}
\newtheorem{prop}[thm]{Proposition}
\newtheorem{lemma}[thm]{Lemma}
\theoremstyle{definition}
\newtheorem{rmk}[thm]{Remark}

\numberwithin{equation}{section}

\providecommand{\ang}[1]{\langle{#1}\rangle}
\providecommand{\flr}[1]{\lfloor{#1}\rfloor}

\DeclareMathOperator{\Lip}{Lip}
\DeclareMathOperator{\Pois}{Poisson}

\def\To{\Rightarrow}
\def\ol{\overline}
\def\wh{\widehat}
\def\wt{\widetilde}

\def\be{\beta}
\def\ga{\gamma}
\def\De{\Delta}
\def\ep{\varepsilon}
\def\ze{\zeta}
\def\th{\theta}
\def\ka{\kappa}
\def\la{\lambda}
\def\si{\sigma}
\def\ph{\varphi}
\def\Om{\Omega}

\def\bC{\mathbb{C}}
\def\bN{\mathbb{N}}
\def\bR{\mathbb{R}}
\def\bS{\mathbb{S}}
\def\cB{\mathcal{B}}
\def\cD{\mathcal{D}}

\begin{document}

\maketitle

\begin{abstract}
  We consider a discrete stochastic process, indexed by lines through the unit
  disk in the plane, which models the observed photon counts in a medical X-ray
  tomography scan. We first prove a functional law of large numbers, showing
  that this process converges in $L^2$ to the X-ray transform of the underlying
  attenuation function. We then prove a family of functional central limit
  theorems, which show that the normalized observations converge to a white
  noise on the space of lines, provided the growth rate of the mean number  of
  photons per line is greater than a certain power of the number of lines
  scanned. Using this family of theorems, we can reduce that power arbitrarily
  close to zero by adding correction terms to the normalization. We also prove a
  Berry-Esseen inequality that gives a concrete rate of convergence for each
  functional central limit theorem in our family of theorems.

  \bigskip

  \noindent{\bf AMS subject classifications:}
  Primary 60F05; secondary 60F17, 60F25, 44A12

  \smallskip

  \noindent{\bf Keywords and phrases:}
  Functional Central Limit Theorem, Berry-Esseen Inequality, White Noise,
  X-Ray Transform, Tomography

\end{abstract}

\section{Introduction}

Medical X-ray tomography, or more commonly, computed tomography (CT), uses
penetrating beams of X-ray photons to construct internal images of the body.
When an X-ray beam passes through the body along a line $L$, its intensity
decreases according to the Beer-Lambert (for brevity, B-L) law,
\begin{equation*}
  I = I_0 \exp \left( - \int_L f(x) \, dx \right),
\end{equation*}
where $I_0$ is the initial intensity, $I$ is the final intensity, and $f$
encodes the attenuation property of the inner body. In $X$-ray tomography one
determines $f$ from knowledge of $I/I_0$  along many different lines. Originated
in the seminal work \cite{Radon1917}, classical mathematics of tomography
concerns questions (e.g., uniqueness, stability, accuracy, resolution,
reconstructing algorithms) in the problem of recovering $f$  from its line
integrals. These have been well understood by now; an insight in the breadth and
depths of the subject can be gained from, e.g.,
\cite{Natterer2001,Epstein2008,Kuchment2014}.

In practice, however, many problems occur due to the inadequacy of the B-L law.
One such problem is the so called ``beam hardening" effect thought due to the
insufficient modeling of the dependence of $f$ on the energy of the $X$-ray,
where the measurement is, in fact,
\begin{equation*}
  I = I_0\int \sigma(E) \exp \left( - \int_L f(x,E) \, dx \right)d E,
\end{equation*}
for a range of distribution in the energy of the $X$-ray beam. Mitigation
approaches exist and essentially rely on ad-hoc modeling: For example in
\cite{Stonestrom1981} a specific ansatz for $f(x,E)$ is sufficient to capture
the qualitative feature that the lower the energy, the worse the line integral
approximation gets, but not in a quantitative way. Different mitigation
approaches  (e.g., \cite{Whiting2002, Elbakri2003, LaRiviere2006,
Xu2009,Yun2025}) modify the particle transport along lines by adding a
stochastic perturbation and study the effect of this noise to the corresponding
solution. All of these mitigation approaches are at the modeling level.

In this paper, we propose a different approach based solely on the number of
particles, irrespective of the physical causes producing them. We think of the
B-L law as fundamentally probabilistic: While the beam intensity decreases
because X-ray photons are absorbed by the body, the actual number of absorbed
photons is random. Thus, the Beer-Lambert equation tells us about the mean
number of absorbed particles. In order to be reliable, the initial number of
photons must be large.

For a lower number of photons, we interpret the fluctuations in the B-L law as a
type of central limit theorem. The effect of a smaller number of incident
photons on the variance from the expected number of emerging photons was
observed as far back as the 1970s: in \cite{Gore1978}, Gore and Tofts gave a
careful derivation of this variance in a certain model of image reconstruction.

We typically focus on two-dimensional cross-sections of an object. For
simplicity, then, we take our attenuation function, $f(x)$, to be defined on the
unit disk $D$ in the plane. If $y$ is a line through $D$, then $Xf(y)$ denotes
the line integral of $f$ along $y$. The function $Xf$ is called the X-ray
transform of $f$. It can be shown that if an X-ray photon passes through the
object along the line $y$, then the probability it emerges without being
absorbed is $e^{-Xf(y)}$. Imagine we pass a large number, $N$, of X-ray photons
through the body along the line $y$. We can then count the number of photons
that emerge and denote it by $S^y f$. By the law of large numbers, $S^y f/N
\approx e^{-Xf(y)}$. Hence, if we define $Y_N f(y) = -\log(S^y f/N)$, then $Y_N
f$ is an approximation of $Xf$. If $N$ is very large, we can treat this
approximate equality as an equality and think of $Y_N f$ as if it were a direct
measurement of $Xf$. With $Xf$ in hand, we must then invert the X-ray transform
to determine $f$ and obtain our picture of the inside of the body.

Now, in practice, we can only do this for a finite collection of lines, $\{y_{j,
k}: 1 \le j \le n, 1 \le k \le m\}$. This means we would be using a random
simple function, $Y_{n, m, N}f$ to approximate a deterministic simple function,
$X_{n, m}f$, which is itself an approximation of $Xf$. We see, therefore, that
the mathematics of medical X-ray tomography involves three separate problems:
\begin{enumerate}[(P1)]
  \item Use $Y_{n, m, N}f$ to approximate $X_{n, m}f$.
  \item Use $X_{n, m}f$ to approximate $Xf$.
  \item Invert $Xf$ to obtain $f$.
\end{enumerate}
Problems (P2) and (P3) are problems of classical, deterministic analysis.
Problem (P1), though, is probabilistic.

The traditional approach to Problem (P1) is to take $N$ so large that we may
treat $Y_{n, m, N}f$ and $X_{n, m}f$ as if they are equal and move on to
Problems (P2) and (P3). This approach amounts to a kind of functional law of
large numbers. In Theorem \ref{T:LLN}, we make this precise by showing that
\begin{equation*}
  E\|Y_{n, m, N}f - X_{n, m}\|_2 \le CN^{-1/2}.
\end{equation*}
Here, $\|\cdot\|_2$ denotes the $L^2$ norm in $Z$, the space of lines through
the unit disk. In Proposition \ref{P:stepEst}, however, we see that the error in
Problem (P2) is of magnitude $n^{-1}$, at least in the special case that $n
\approx m$. Hence, when using this traditional approach to (P1), we must take $N
\approx n^2$ for the errors in (P1) and (P2) to be comparable.

To improve on this, we must understand the fluctuations of $Y_{n, m, N}f$ around
its asymptotic limit. In other words, we aim to prove a functional central limit
theorem for $Y_{n, m, N}f$. Our main result, Theorem \ref{T:CLT}, does exactly
this. In fact, it gives an entire family of functional central limits. The
simplest member of this family says that
\begin{equation*}
  \sqrt{nmN} \, (Y_{n, m, N}f - X_{n, m}f) \To \pi e^{Xf/2} \, W
\end{equation*}
in $\cD'(Z)$ whenever $n, m, N \to \infty$ in such a way that $nm/N \to 0$.
Here, the double arrow $\To$ denotes convergence in distribution, the object $W$
is a white noise on $Z$, and $\cD'(Z)$ is the space of generalized functions on
$Z$. The problem with this most basic result is that it does not allow us to
reduce the value of $N$. In fact, it requires us to increase it. Assuming again
that $n \approx m$, we see that we must now take $N \gg n^2$. Compared to the
functional law of large numbers that requires $N \approx n^2$, this is worse.

However, the second simplest instance of Theorem \ref{T:CLT} does give us an
improvement. This is presented in Corollary \ref{C:CLT3}. There we find that
\begin{equation}\label{CLT3}
  \sqrt{nmN} \left(
    Y_{n, m, N}f - X_{n, m}f + \frac 1 {2N} e^{X_{n, m}f}
  \right) \To \pi e^{Xf/2} \, W
\end{equation}
in $\cD'(Z)$ whenever $n, m, N \to \infty$ in such a way that $nm/N^3 \to 0$. By
adding a correction term to our centering of $Y_{n, m, N}f$, we improve the rate
of convergence. Now we need only take $N \gg n^{2/3}$, which is a significant
improvement on needing $N \approx n^2$ in the functional law of large numbers.

More generally, Theorem \ref{T:CLT} shows us that by adding additional
correction terms, we can obtain arbitrarily sharper results. As an example, in
Corollary \ref{C:CLT5}, we see that
\begin{equation}\label{CLT5}
  \sqrt{nmN} \left(
    Y_{n, m, N}f - X_{n, m}f + \frac 1 {2N} e^{X_{n, m}f}
    + \frac 1 {12N^2} e^{2 X_{n, m}f}
  \right) \To \pi e^{Xf/2} \, W
\end{equation}
in $\cD'(Z)$ whenever $n, m, N \to \infty$ in such a way that $nm/N^5 \to 0$. By
adding more and more correction terms, we can obtain our result under the
condition $nm/N^\ka \to 0$ for any odd, positive integer $\ka$.

Theorem \ref{T:CLT} is proved in Section \ref{S:CLTpf} by first establishing the
Berry-Esseen inequality that appears in \eqref{BE}. This inequality, which is
tailored specifically for the tomography model we are considering, is itself a
consequence of the structurally simpler and more familiar-looking Berry-Esseen
inequality in \eqref{BEraw}. The latter inequality follows more or less directly
from the results in \cite{Petrov1975} for sums of independent, mean zero random
variables. (The value of the constant was obtained in \cite{Shevtsova2014}.)

The path of derivation that leads from \eqref{BEraw} to Theorem \ref{T:CLT}
involves the introduction of two different Taylor expansions. Our use of
expansions, however, should not be confused with the kind of expansions one
finds, for example, in \cite{Bhattacharya2010} and \cite{Barbour1986}. In those
cases as well as ours, one is concerned with a sum $W_n$ of independent random
variables, and how close its distribution is to that of a normal random variable
$Z$. The basic Berry-Esseen inequalities in \cite{Petrov1975} give us
first-order expansions of the form $P(W_n \le x) = \Phi(x) + O(n^{-1/2})$. In
\cite{Bhattacharya2010}, it is this expansion that is refined, so that we can
better estimate the error $P(W_n \le x) - \Phi(x)$ with higher-order precision.
In \cite{Barbour1986}, Stein's method is used to similarly refine the
first-order expansion $E h(W_n) = E h(Z) + O(n^{-1/2})$, where $h$ is a smooth
function. In short, it is the distribution itself that is being expanded in
these approaches.

In our case, we are not expanding the distribution of the sum, but rather the
terms within the sum. In our tomography model, we are working with a sum that
has roughly the form $\sum \ph(S_n)$, where each term is wrapped in a common
function $\ph$. It is this function that we expand. Through expansions, we are
able to identify a succession of better and better centering terms in order to
increase the size of the regime in which we have convergence. As such, our
expansions are less analogous to those in \cite{Bhattacharya2010} and
\cite{Barbour1986}, and more analogous to the way we use expansions in the
construction of the stochastic integral.

In all of this, there is one subtle issue that we have yet to address. When we
count the emerging photons along a given line, there is a negligible---but still
positive---probability that the count will be zero. In that case, we cannot
evaluate the logarithm of the photon count. Although this unlikely possibility
may not occur in any given instance of a CT scan, it must be accounted for
mathematically. We present here three different ways to normalize the photon
counts before applying the logarithm.
\begin{enumerate}[(N1)]
  \item Add one to the photon count, regardless of its value.
  \item Add one to the photon count when it is zero.
  \item If the photon count along a line is zero, discard the result for that
        line and try again.
\end{enumerate}
The first of these is the simplest, mathematically, while the last is perhaps
the most natural in practice. We give complete results for all three
normalization methods in this paper. The limits in \eqref{CLT3} and \eqref{CLT5}
hold when using method (N1), whereas under (N2) and (N3), the coefficients
$1/2N$ and $1/12N^2$ become $-1/2N$ and $7/12N^2$, respectively. (See Corollary
\ref{C:maxCLT} for details.)

The outline of this paper is as follows. Section \ref{S:main} establishes
important definitions and notation, based on (N1), and presents our main
results. In Section \ref{S:LLNpf}, we prove the functional law of large numbers.
To prove our Berry-Esseen inequality, we must derive the asymptotics of certain
parameters. This is done in Section \ref{S:BEparams}. Then, in Section
\ref{S:CLTpf}, we prove our functional central limit theorem. Finally, in
Section \ref{S:altNorm}, we show how these results are altered under the
normalization methods, (N2) and (N3).

\section{Notation, background, and main results}\label{S:main}

\subsection{Discretizing the X-ray transform}

Let $D = \{x \in \bR^2: |x| < 1\}$. Let $C(\ol D; [0, \infty))$ be the set of
continuous, nonnegative functions on $\ol D$ and $\Lip(D)$ the set of Lipschitz
functions on $D$. Let $Z = (0, 1) \times \bS^1$, where $\bS^1$ denotes the unit
circle in $\bR^2$. For $y = (s, \tau) \in Z$, let $L_y = \{x \in \bR^2: \ang{x,
\tau} = s\}$ denote the line through the point $s \tau \in D$ that is orthogonal
to $\tau$. Given $g \in L^2(D, \cB_D, \la)$, where $\cB_D$ is the Borel
$\si$-algebra and $\la$ is Lebesgue measure, the X-ray transform of $g$ is the
function $Xg: Z \to \bR$ given by
\begin{equation*}
  Xg(y) = \int_{L_y\cap D} g \, d\la_{L_y},
\end{equation*}
where $\la_{L_y}$ is Lebesgue measure on $L_y$. If we define the measure $\nu$
on $Z$ by
\begin{equation*}
  \nu(ds \, d\tau) = (1 - s^2)^{-1/2} \, ds \, \si(d\tau),
\end{equation*}
where $\si$ is Lebesgue measure on $\bS^1$, then $Xg \in L^2(Z, \cB_Z, \nu)$,
and $X: L^2(D) \to L^2(Z)$ is a bounded linear operator. Note that $\nu(Z) =
\pi^2$. Also note that $|Xg(y)| \le \|g\|_\infty \la_{L_y}(L_y \cap D)$. Hence,
$\|Xg\|_\infty \le 2\|g\|_\infty$. In particular, $X: L^\infty(D) \to
L^\infty(Z)$ is continuous, and also $X: C(\ol D; [0, \infty)) \to C(\ol Z; [0,
\infty))$ is continuous. We further note that if $g \in \Lip(D)$, then $Xg \in
\Lip(Z)$. Throughout this work, we will use $\|\cdot\|_r$ to denote the norm in
$L^r(Z, \cB_Z, \nu)$. We will also use unadorned brackets, $\ang{\cdot, \cdot}$,
for the inner product in $L^2(Z)$.

Fix a strictly positive function $f \in C(\ol D; [0, \infty)) \cap \Lip(D)$ to
serve as our attenuation function. Given $n, m \in \bN$, let $s_j = \sin(\pi
j/2n)$, $\th_k = 2\pi k/m$, and
\begin{equation*}
  A_{j, k} = (s_{j - 1}, s_j] \times \{e^{i\th}: \th \in (\th_{k - 1}, \th_k]\}
    \subset Z.
\end{equation*}
Here we are identifying $\bC$ with $\bR^2$. Note that $\nu(A_{j, k}) = \pi^2/nm$
and $\De s_j = s_j - s_{j - 1} \le \pi/2n$. Let $y_{j, k} = (s_j, e^{i\th_k})$
denote one of the corners of $A_{j, k}$, and define
\begin{equation*}
  X_{n, m}f = \sum_{j, k} Xf(y_{j, k}) 1_{A_{j, k}}.
\end{equation*}
In the above, we use $\sum_{j, k}$ to denote the double sum $\sum_{j = 1}^n
\sum_{k = 1}^m$. We will employ this convention throughout this paper. We will
also use $p_{j, k} = e^{-Xf(y_{j, k})}$ to denote the probability of a given
photon emerging from the body after being transmitted along the line $L_{y_{j,
k}}$. Note that $0 < e^{-\|Xf\|_\infty} \le p_{j, k} \le 1$.

Another convention used throughout is that the letter $C$ will denote a constant
which may change value from instance to instance. The constant $C$ will
typically depend only on a limited number of parameters. Which parameters these
are will be made clear whenever such a constant is used.

\begin{prop}\label{P:stepEst}
  There exists a constant $C$ that depends only on $f$ such that
  \begin{equation*}
    \|X_{n, m}f - Xf\|_\infty \le C \left( \frac 1 n + \frac 1 m \right)
  \end{equation*}
  for all $n$ and $m$.
\end{prop}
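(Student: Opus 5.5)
The plan is to reduce the estimate to a Lipschitz bound for $Xf$ in good coordinates on $Z$. Fix $y \in A_{j,k}$. Since $X_{n,m}f(y) = Xf(y_{j,k})$, we have $|X_{n,m}f(y) - Xf(y)| = |Xf(y) - Xf(y_{j,k})|$, and $y$ and $y_{j,k}$ lie in the same cell. So it suffices to show that $Xf$ oscillates by at most $C(1/n + 1/m)$ on each cell $A_{j,k}$.

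The main obstacle is that $Xf$ is \emph{not} Lipschitz in the variable $s$ up to $s = 1$. The chord length $2\sqrt{1-s^2}$ has unbounded derivative there, and $f$ is strictly positive, so the boundary term in $\partial_s Xf$ blows up. The choice $s_j = \sin(\pi j/2n)$ suggests the fix: use the angular variable $\phi$ with $s = \sin\phi$. The cells then become $\phi \in (\pi(j-1)/2n, \pi j/2n]$ and $\theta \in (\theta_{k-1}, \theta_k]$, with side lengths $\pi/2n$ and $2\pi/m$.

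Accordingly, define $F(\phi, \theta) = Xf(\sin\phi, e^{i\theta})$ for $\phi \in [0, \pi/2]$ and $\theta \in \bR$, and write $\tau = e^{i\theta}$ and $\tau^\perp = ie^{i\theta}$. Parametrizing the chord $L_y \cap D$ by $x = \sin\phi\,\tau + u\cos\phi\,\tau^\perp$ with $u \in [-1,1]$ gives
\begin{equation*}
  F(\phi, \theta) = \cos\phi \int_{-1}^1 f\bigl(\sin\phi\, e^{i\theta} + u\cos\phi\, ie^{i\theta}\bigr)\, du .
\end{equation*}
The argument point $x(\phi, \theta, u)$ lies in $\ol D$, since $|x|^2 = \sin^2\phi + u^2\cos^2\phi \le 1$. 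It is Lipschitz in $(\phi, \theta)$, uniformly in $u \in [-1,1]$, with constant at most $2$ in each variable; this follows because its partial derivatives have modulus at most $1 + |u| \le 2$.

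Since $f$ is Lipschitz on $D$, it extends continuously to $\ol D$ with the same Lipschitz constant $\mathrm{Lip}(f)$ and is bounded by $\|f\|_\infty$. The product rule then gives
\begin{equation*}
  |F(\phi, \theta) - F(\phi', \theta')| \le 2\bigl(\|f\|_\infty + 2\,\mathrm{Lip}(f)\bigr)\bigl(|\phi - \phi'| + |\theta - \theta'|\bigr).
\end{equation*}
Here the term $\|f\|_\infty$ comes from differentiating the factor $\cos\phi$, and the term $\mathrm{Lip}(f)$ comes from moving the integrand's argument.

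To conclude, note that any $y \in A_{j,k}$ and the corner $y_{j,k}$ have $\phi$-coordinates differing by at most $\pi/2n$ and $\theta$-coordinates differing by at most $2\pi/m$. Hence
\begin{equation*}
  |Xf(y) - Xf(y_{j,k})| \le C\bigl(\pi/2n + 2\pi/m\bigr) \le C\left(\frac 1 n + \frac 1 m\right),
\end{equation*}
with $C$ depending only on $f$. Taking the supremum over $y$ and over all cells $(j,k)$ gives the proposition. The only delicate point is the change of variables $s = \sin\phi$. Once it is made, the rest is a routine Lipschitz estimate of a parameter-dependent integral.
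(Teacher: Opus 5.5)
Your argument is correct, and it is more careful than the paper's. The paper's proof is one line. It invokes $Xf \in \Lip(Z)$, which is asserted without proof when $X$ is introduced, together with the bound $C(\Delta s_j + 2\pi/m)$ on the diameter of $A_{j,k}$. In other words, it uses Lipschitz continuity of $Xf$ in the $(s,\tau)$ coordinates.

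As you point out, that fails near $s=1$. For $f \equiv 1$ one has $Xf(s,\tau) = 2\sqrt{1-s^2}$, whose oscillation over the last cell is $2\sin(\pi/2n) \approx \pi/n$. Meanwhile $\Delta s_n = 1-\cos(\pi/2n) \approx \pi^2/8n^2$, so no bound of the form $C\,\Delta s_j$ can hold uniformly in $n$.

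The two proofs share the same skeleton: the oscillation over a cell is bounded by a Lipschitz constant times the cell size. Your substitution $s = \sin\phi$ supplies the coordinates in which the Lipschitz claim is actually true. These are also the coordinates in which $\nu = d\phi\,d\sigma$, which is why $\nu(A_{j,k}) = \pi^2/nm$. In them every cell becomes a $\pi/2n$ by $2\pi/m$ rectangle, and you prove the Lipschitz bound explicitly with constant $2(\|f\|_\infty + 2\Lip(f))$.

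The paper's argument is valid only if $\Lip(Z)$ is read with respect to this $(\phi,\theta)$ metric and the intermediate bound $C\,\Delta s_j$ is replaced by $\pi/2n$, which is exactly what you do. Defining $F$ on the closed range $\phi \in [0,\pi/2]$ also cleanly handles the corners $y_{n,k} = (1, e^{i\theta_k})$, which lie outside $Z$ and where $Xf = 0$.
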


\begin{proof}
  Fix $y \in Z$ and choose $j$ and $k$ such that $y \in A_{j, k}$. Then $X_{n,
  m}f(y) - Xf(y) = Xf(y_{j, k}) - Xf(y)$. The result now follows from the fact
  that $Xf \in \Lip(Z)$ and the diameter of $A_{j, k}$ is bounded by $C(\De s +
  2\pi/m) \le C(\pi/2n + 2\pi/m)$.
\end{proof}

\subsection{Defining the observation process}

In practice, we cannot control the number of X-ray photons that are transmitted
along a given line. We can only control its mean. The actual number of particles
will be random and have a Poisson distribution. Hence, we fix $N \in \bN$, and
for each $y \in Z$, we let $V^y \sim \Pois(N)$. Then $V^y$ represents the number
of X-ray photons that are sent along the line $L_y$. For each $\ell \in \bN$,
let $\xi^y_\ell f$ be Bernoulli with $P(\xi^y_\ell f = 1) = e^{-Xf(y)}$. The
event $\{\xi^y_\ell f = 1\}$ represents the event that the $\ell$-th particle to
pass along $L_y$ emerges from the body. Because the attenuation function $f$ is
fixed, the absorption of distinct particles is independent. Therefore, $V^y,
\xi^y_1 f, \xi^y_2 f, \ldots$ are independent from each other and also
independent for different values of $y$. Finally, define $S^y f = \sum_{\ell =
1}^{V^y} \xi^y_\ell f$, which represents the number of particles that emerge
after being sent along $L_y$. We then have $S^y f \sim \Pois(N e^{-Xf(y)})$.
(This latter fact follows, for instance, from \cite[Exercise
3.6.12]{Durrett2010}.)

As noted in the introduction, we would like to define $Y_{n, m, N}f$, which is
our approximation of $X_{n, m}f$, to be
\begin{equation*}
  - \sum_{j, k} {
    \log \left( \frac{S^{y_{j, k}}f} N \right) 1_{A_{j, k}}
  },
\end{equation*}
since $S^{y_{j, k}}f/N \approx e^{-Xf(y_{j, k})}$, according to the law of large
numbers. But this is not possible. Although the event $\{S^{y_{j, k}}f = 0\}$ is
exponentially unlikely, it still has positive probability. Hence, the random
variable $\log(S^{y_{j, k}}f)$ is undefined, at least as a real-valued random
variable. We could define $\log(S^{y_{j, k}}f) = -\infty$ on $\{S^{y_{j, k}}f =
0\}$, but then $Y_{n, m, N}$ would be infinity on a set of positive measure,
with positive probability. This would cause a number of problems that prevent us
from making meaningful statements about the asymptotics of $Y_{n, m, N}$. For
example, it would imply that $E\|Y_{n, m, N}\|_2 = \infty$.

To fix this, we must make some adjustment to account for the remote possibility
that $S^{y_{j, k}}f = 0$. There are a number of different adjustments we might
make. The most natural would be to simply try again. That is, if we happened to
find that no particles emerged along a certain line, then we would redo the scan
along that line until at least one particle emerges. To represent this, we could
let $\wt S^{y_{j, k}}f$ have the same distribution as $S^{y_{j, k}}f$
conditioned to be positive, and then define $Y_{n, m, N}f$ as
\begin{equation*}
  - \sum_{j, k} {
    \log \left( \frac{\wt S^{y_{j, k}}f} N \right) 1_{A_{j, k}}
  }.
\end{equation*}
This is normalization method (N3) listed in the introduction.

Another possibility would be to not redo the scan but to simply record a count
of zero particles as if it were a count of one particle. This would be
represented by defining $Y_{n, m, N}f$ as
\begin{equation*}
  - \sum_{j, k} {
    \log \left( \frac{S^{y_{j, k}}f \vee 1} N \right) 1_{A_{j, k}}
  },
\end{equation*}
where $a \vee b$ is the maximum of $a$ and $b$. This is normalization method
(N2) listed in the introduction.

The approach we take initially in this paper is (N1), which is to simply add one
to our emerging photon count, no matter what happens. That is, we define
\begin{equation}\label{Ydef}
  Y_{n, m, N}f = - \sum_{j, k} {
    \log \left( \frac{S^{y_{j, k}}f + 1} N \right) 1_{A_{j, k}}
  },
\end{equation}
We have chosen to begin with this approach because of its mathematical
simplicity. In Section \ref{S:altNorm}, we give a complete analysis of the other
two approaches, (N2) and (N3).

\subsection{Poisson estimates}

To deal with the Poisson distribution in the definition of $Y_{n, m, N}f$, we
define $\mu_r(\la) = E[(S - \la)^r]$, where $S \sim \Pois(\la)$ and $r$ is a
nonnegative integer. The functions $\mu_r$ satisfy the recursion $\mu_{r + 1} =
\la(\mu_r' + r \mu_{r - 1})$. (See, for example, \cite[(4.8)]{Riordan1937}.) The
first few such functions are $\mu_0(\la) = 1$, $\mu_1(\la) = 0$,
\begin{align*}
  \mu_2(\la) &= \la,\\
  \mu_3(\la) &= \la,\\
  \mu_4(\la) &= 3\la^2 + \la, \text{ and}\\
  \mu_5(\la) &= 10\la^2 + \la.
\end{align*}
More generally, it can be shown that $\mu_r$ is a polynomial of degree
$\flr{r/2}$ with no constant term.

\begin{lemma}\label{L:PoisMom}
  Let $r \in [0, \infty]$. Then there exists a constant $C$ that depends only on
  $f$ and $r$ such that
  \begin{equation*}
    E|S^{y_{j, k}}f - Np_{j, k}|^r \le C N^{r/2} p_{j, k}^{r/2}.
  \end{equation*}
\end{lemma}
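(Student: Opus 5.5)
The idea is to reduce to even integer moments, where the central moments of the Poisson distribution are explicit polynomials, and then pass to general $r$ by Lyapunov's inequality. Throughout, write $\la = Np_{j, k}$, so that $S^{y_{j, k}}f \sim \Pois(\la)$ and $E|S^{y_{j, k}}f - \la|^r$ depends only on $\la$ and $r$. The key observation is that $\la$ is bounded away from zero uniformly in $j, k, n, m, N$. Indeed, $N \ge 1$ and $p_{j, k} \ge e^{-\|Xf\|_\infty}$, so
\begin{equation*}
  \la \ge c := e^{-\|Xf\|_\infty} > 0,
\end{equation*}
where $c$ depends only on $f$. I read the statement for finite $r \ge 0$. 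The case $r = 0$ is trivial, so assume $r > 0$.

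\textbf{Step 1 (even integer moments).} Take $r = 2q$ with $q \in \bN$. Then $E|S^{y_{j, k}}f - \la|^{2q} = \mu_{2q}(\la)$. As noted above, $\mu_{2q}$ is a polynomial of degree $q$ with no constant term, so $\mu_{2q}(\la) = \sum_{i = 1}^q a_i \la^i$ for coefficients $a_i$ depending only on $q$. For each $1 \le i \le q$ and $\la \ge c$ we have
\begin{equation*}
  \la^i = \la^q \la^{i - q} \le c^{i - q} \la^q.
\end{equation*}
Hence
\begin{equation*}
  \mu_{2q}(\la) \le \Big(\sum_{i = 1}^q |a_i| c^{i - q}\Big) \la^q = C \la^q,
\end{equation*}
with $C$ depending only on $q$ and $f$. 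Signs of the $a_i$ do not matter, since we bound them in absolute value. (In fact the recursion $\mu_{r + 1} = \la(\mu_r' + r\mu_{r - 1})$ shows inductively that the coefficients are nonnegative.)

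\textbf{Step 2 (general $r$).} For arbitrary $r > 0$, choose an integer $q$ with $2q \ge r$, for instance $q = \lceil r/2 \rceil$. By Lyapunov's (Jensen's) inequality,
\begin{equation*}
  E|S^{y_{j, k}}f - \la|^r \le \big(E|S^{y_{j, k}}f - \la|^{2q}\big)^{r/2q} \le (C\la^q)^{r/2q} = C^{r/2q} \la^{r/2}.
\end{equation*}
Since $\la^{r/2} = N^{r/2}p_{j, k}^{r/2}$, this is the claimed bound. The constant depends only on $r$ (through $q$) and on $f$ (through $c$).

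\textbf{Main obstacle.} The one point needing care is that the bound must be uniform in $N$ and in the line $y_{j, k}$. For small $\la$, the lower-order term $a_1\la$ of $\mu_{2q}$ dominates $\la^q$, so the estimate would fail. This is handled exactly by the lower bound $\la \ge e^{-\|Xf\|_\infty}$, which comes from $N \ge 1$ and the boundedness of $Xf$. That lower bound is also what makes the constant depend on $f$.
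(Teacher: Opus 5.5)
Your proof is correct and follows essentially the same route as the paper: bound the even central moment $\mu_{2q}(Np_{j,k})$ by $C N^q p_{j,k}^q$, using that $\mu_{2q}$ is a polynomial of degree $q$ and that $Np_{j,k} \ge e^{-\|Xf\|_\infty}$, then pass to general $r$ by Jensen/Lyapunov. Your explicit coefficient bound simply spells out the paper's appeal to $\mu_{2i}(\lambda) = O(\lambda^i)$ together with the lower bound on $\lambda$, and restricting to finite $r$ matches what the paper's argument actually covers.
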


\begin{proof}
  Let $i$ be the smallest integer such that $r \le 2i$. Since $\mu_{2i}(\la) =
  O(\la^i)$ as $\la \to \infty$ and $p_{j, k} \ge e^{-\|Xf\|_\infty}$, we have
  \begin{equation*}
    E|S^{y_{j, k}}f - Np_{j, k}|^{2i} = \mu_{2i}(Np_{j, k}) \le C N^i p_{j, k}^i.
  \end{equation*}
  By Jensen's inequality,
  \begin{equation*}
    E|S^{y_{j, k}}f - Np_{j, k}|^r \le (E|S^{y_{j, k}}f - Np_{j, k}|^{2i})^{r/2i}
     \le C N^{r/2} p_{j, k}^{r/2},
  \end{equation*}
  where $C$ depends only on $f$ and $r$.
\end{proof}

\subsection{The functional limit theorems}

To express our functional central limit theorem, we use the following notation.
Let $\cD'(Z)$ be the space of generalized functions on $Z$. That is, $\cD'(Z)$
is the space of continuous linear functionals on $C_c^\infty(Z)$, endowed with
the topology of pointwise convergence. A white noise on $Z$ is a
$\cD'(Z)$-valued random variable $W$ that satisfies $\ang{W, g} \sim N(0,
\|g\|^2)$ for all $g \in C_c^\infty(Z)$. The existence of white noise is
guaranteed by the Bochner-Minlos theorem. (See, for example, \cite[Theorem
2.1.1]{Holden1996}.) We will use the double arrow, $\To$, to indicate
convergence in distribution.

The main results of this paper are the following two theorems. The first is a
functional law of large numbers and the second is a functional central limit
theorem.

\begin{thm}\label{T:LLN}
  There is a constant $C$ that depends only on $f$ such that
  \begin{equation*}
    \|Y_{n, m, N}f - X_{n, m}f\|_{L^2(S \times \Om)} \le \frac C {\sqrt{N}}
  \end{equation*}
  for all $n$, $m$, and $N$.
\end{thm}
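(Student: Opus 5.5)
Since $Y_{n,m,N}f - X_{n,m}f$ is a simple function constant on each $A_{j,k}$ and $\nu(A_{j,k}) = \pi^2/nm$, Fubini gives
\begin{equation*}
  \|Y_{n,m,N}f - X_{n,m}f\|_{L^2(Z\times\Om)}^2
    = \frac{\pi^2}{nm}\sum_{j,k} E\left[\left(\log\frac{S^{y_{j,k}}f+1}{N} + Xf(y_{j,k})\right)^2\right].
\end{equation*}
This is $\pi^2$ times an average over $j,k$. It therefore suffices to prove a single-line bound: with $S\sim\Pois(Np)$ and $p = p_{j,k}\in[e^{-\|Xf\|_\infty},1]$,
\begin{equation*}
  E\left[\log^2\frac{S+1}{Np}\right] \le \frac{C}{N},
\end{equation*}
where $C$ depends only on $f$. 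Here we used $Xf(y_{j,k}) = -\log p$.

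For the single-line bound I would use an elementary pointwise inequality for the logarithm, avoiding a Taylor expansion with remainder. For $u>0$ we have $|\log u| \le |u-1|/\min(u,1)$. Set $u = (S+1)/(Np)$. On the event $\{S+1 \ge Np/2\}$ we get $|\log u| \le 2|S+1-Np|/(Np)$. Hence the contribution of this event is at most
\begin{equation*}
  \frac{8}{N^2p^2}\,E\left[(S-Np)^2 + 1\right] = \frac{8(Np+1)}{N^2p^2} \le \frac{C}{N},
\end{equation*}
using $\mu_2(\la)=\la$ (or Lemma \ref{L:PoisMom} with $r=2$) and $p \ge e^{-\|Xf\|_\infty}$.

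On the complementary event $\{S+1 < Np/2\}$, we use $S+1\ge 1$, which gives $|\log u| \le \log(Np) \le \log N$. By Chebyshev-type bounds with higher moments,
\begin{equation*}
  P(S+1<Np/2) \le P(|S-Np|\ge Np/2) \le \frac{E|S-Np|^{4}}{(Np/2)^{4}} \le \frac{C}{N^2},
\end{equation*}
by Lemma \ref{L:PoisMom} with $r=4$. This contribution is then at most $C(\log N)^2/N^2 \le C/N$.

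The main point of care is exactly this exceptional event. The "+1" normalization is what keeps the logarithm finite: it bounds the log by $\log N$ in size. Lemma \ref{L:PoisMom} supplies enough polynomial decay of the tail probability to absorb the $\log^2 N$ factor. (Small $N$, where $Np/2<1$ so the event could be empty or trivial, is covered by enlarging $C$.) All constants depend only on $\|Xf\|_\infty$, hence only on $f$. Summing over $j,k$ and taking square roots then gives the stated $C/\sqrt N$ bound, uniformly in $n$ and $m$.
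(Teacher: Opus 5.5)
Your proof is correct, and it takes a genuinely different route from the paper's.

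\textbf{The paper's route.} It applies the expansion \eqref{Taylor} with $a = 1$, $b = 0$. This writes $Y_{n,m,N}f - X_{n,m}f$ as the linear fluctuation term $(nmN)^{-1/2}W_{n,m,N}f$, plus a deterministic remainder $R_1$ of order $1/N$, plus a stochastic integral remainder $R_2$ handled by Lemma \ref{L:remEst}. In that lemma, the bound available on the bad event $\{S \le Np/2\}$ is polynomial, $|U| \le C|S - Np|^{a+1}$. It therefore has to be paired with a strong tail estimate, which the paper takes from the Chernoff-type bound of Lemma \ref{L:PoisLD}.

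\textbf{Your route.} You bound $\log((S+1)/(Np))$ directly via $|\log u| \le |u-1|/\min(u,1)$. On the bad event, the $+1$ caps the logarithm at $\log N$ rather than at a power of $|S - Np|$. A fourth-moment Chebyshev bound is then already enough, and no large-deviation input is needed.

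\textbf{Comparison.} Your argument is shorter and self-contained for the law of large numbers. The paper's route costs more here, but it isolates $W_{n,m,N}f$ as the leading term and proves Lemma \ref{L:remEst} for general $a$. That is exactly the machinery reused in Lemma \ref{L:Taylor+} and the proof of Theorem \ref{T:CLT}.

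A minor point: your small-$N$ caveat is unnecessary. The event $\{S+1 < Np/2\}$ is empty whenever $Np \le 2$, so the argument holds for all $N \ge 1$ as written.
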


\begin{thm}\label{T:CLT}
  Let $a$ and $b$ be nonnegative integers with $a \ge 1$, and let $W$ be a white
  noise on $Z$. Define
  \begin{equation*}
    Z_{n, m, N}f = \sqrt{nmN} \, \bigg(
      Y_{n, m, N}f - X_{n, m}f - \sum_{r = 1}^b {
        \frac {(-1)^r} {rN^r} \, e^{rX_{n, m}f}
      } - \sum_{r = 2}^a \frac {
        (-1)^r \mu_r(N e^{-X_{n, m}f})
      } {
        r (N e^{-X_{n, m}f} + 1)^r
      }
    \bigg),
  \end{equation*}
  and let $\ka = \min\{a, 2b + 1\}$. Then $Z_{n, m, N}f \To \pi e^{Xf/2} \, W$
  in $\cD'(Z)$ whenever $n, m, N \to \infty$ in such a way that $nm/N^\ka \to
  0$.
\end{thm}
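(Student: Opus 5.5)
Since $\cD'(Z)$ carries the topology of pointwise convergence, the plan is to reduce to one-dimensional statements. By a Lévy-type continuity theorem for $\cD'$ (or directly, since $C_c^\infty(Z)$ is nuclear), it suffices to show that for each fixed $g \in C_c^\infty(Z)$,
\begin{equation*}
  \ang{Z_{n, m, N}f, g} \To N\bigl(0, \pi^2 \|e^{Xf/2} g\|_2^2\bigr).
\end{equation*}
Writing $g_{j,k} = \int_{A_{j,k}} g\,d\nu$, the pairing is a sum of independent terms,
\begin{equation*}
  \ang{Z_{n, m, N}f, g} = \sqrt{nmN}\sum_{j,k} g_{j,k}\,\bigl(-\log\tfrac{S_{j,k}+1}{N} - X f(y_{j,k}) - c_{j,k}\bigr),
\end{equation*}
where $S_{j,k} = S^{y_{j,k}}f$ and $c_{j,k}$ collects the two correction sums evaluated at $y_{j,k}$. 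Set $\la_{j,k} = Np_{j,k}$ and $U_{j,k} = (S_{j,k} - \la_{j,k})/(\la_{j,k}+1)$. Since $-\log\tfrac{S+1}{N} - Xf(y) = -\log\tfrac{\la+1}{\la} - \log(1+U)$, we expand $-\log(\la/(\la+1)+...)$ as follows. First, $\log(1+1/\la) = \sum_{r\ge1}(-1)^{r+1}/(r\la^r)$, and truncating at $b$ gives exactly the first correction sum with remainder $O(N^{-(b+1)})$. Second, $-\log(1+U) = \sum_{r=1}^a (-1)^r U^r/r + R_a$, with $E[U^r] = \mu_r(\la)/(\la+1)^r$. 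The $r=1$ term has mean zero, and the terms with $2\le r\le a$ are centered by the second correction sum. What remains is a mean-zero main term $-U_{j,k}$, centered fluctuations of $U^r$ for $r\ge 2$, and the remainders.

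The key steps then run as follows.

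\textbf{Main term.} Apply Lyapunov's CLT (or the Berry--Esseen bound from \cite{Petrov1975}) to $-\sqrt{nmN}\sum g_{j,k}U_{j,k}$. Its variance is
\begin{equation*}
  nmN\sum g_{j,k}^2 \,\frac{\la_{j,k}}{(\la_{j,k}+1)^2} \approx nm\sum g_{j,k}^2 e^{Xf(y_{j,k})}.
\end{equation*}
Since $g_{j,k} \approx g(y_{j,k})\pi^2/nm$, this is $\approx \pi^2\sum g^2 e^{Xf}\nu(A_{j,k}) \to \pi^2\|e^{Xf/2}g\|^2$ by Riemann sums, using the continuity of $Xf$ and Proposition \ref{P:stepEst}. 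The third absolute moments are controlled by Lemma \ref{L:PoisMom} and give a Lyapunov ratio $O((nmN)^{-1/2}\cdot\ldots)\to0$, or more simply $O(1/\sqrt{nm})$.

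\textbf{Higher fluctuation terms.} For $r\ge2$, $\sqrt{nmN}\sum g_{j,k}(U^r - EU^r)$ has variance $\le nmN\sum g_{j,k}^2 E U^{2r} \le C\,N\cdot N^{-r} \to 0$ by Lemma \ref{L:PoisMom}. Chebyshev then shows these terms are negligible.

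\textbf{Remainders.} The bias from truncating the $1/\la$ series contributes $\sqrt{nmN}\,O(N^{-(b+1)})$, which tends to zero iff $nm/N^{2b+1}\to0$. The Taylor remainder $R_a$ must be handled carefully, since $U$ is not small on the event where $S$ is near $0$ (note $U>-1$ always). On $\{|U|\le 1/2\}$ we have $|R_a|\le C|U|^{a+1}$, so
\begin{equation*}
  \sqrt{nmN}\sum|g_{j,k}|\,E|U|^{a+1} \le C\sqrt{nmN}\,N^{-(a+1)/2} = C\sqrt{nm/N^{a}},
\end{equation*}
which tends to zero iff $nm/N^a\to0$. On $\{|U|>1/2\}$, $|\log(1+U)|\le C\log(N+1) + |U|$ because $S+1\ge1$. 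Combined with a Poisson large-deviation (Chernoff) bound $P(|U|>1/2)\le e^{-cN}$, this part is negligible. Together these steps give $\ka=\min\{a,2b+1\}$.

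I expect the main obstacle to be this remainder step. Estimating the $L^1$ mean of the remainder is what dictates the rate condition, and the bias terms must be shown to be genuinely of order $\sqrt{nm/N^{\ka}}$ rather than larger. The care needed is in the Taylor expansion near $S=0$ and in matching powers of $N$ exactly. A secondary point is justifying that one-dimensional convergence for every $g$ suffices in $\cD'(Z)$. For this I would cite the standard result that, on the dual of a nuclear Fréchet space, convergence of characteristic functionals implies convergence in distribution.
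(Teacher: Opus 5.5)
Your proposal is correct and follows essentially the same route as the paper: the $1/\lambda$ expansion truncated at $b$, the expansion in $U$ truncated at $a$ and centered by $\mu_r$, the main term $W_{n,m,N}f$ handled by Lyapunov/Berry--Esseen with the variance and third-moment asymptotics of Propositions \ref{P:variance} and \ref{P:skew}, the centered higher-order terms controlled in the weak sense as in Lemma \ref{L:weakRem}, and a large-deviation split for the Taylor remainder as in Lemma \ref{L:remEst}. The differences are minor: you bound the remainders in $L^1$ via Markov, splitting on $\{|U|\le 1/2\}$ with a two-sided Chernoff bound, whereas the paper uses the integral form of the remainder with the one-sided split $\{S > Np/2\}$ and $L^2$ bounds with Chebyshev, which also yields the quantitative rate \eqref{BE}.
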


The simplest instance of Theorem \ref{T:CLT} is obtained by taking $a = 1$ and
$b = 0$. In this case, we find that $\sqrt{nmN} \, (Y_{n, m, N}f - X_{n, m}f)
\To \pi e^{Xf/2} \, W$ whenever $n, m, N \to \infty$ in such a way that $nm/N
\to 0$. As noted in the introduction, this result is not strong enough to reduce
the value of $N$ below what is needed for the law of large numbers. In Section
\ref{S:CLTpf}, we illustrate deriving the next two simplest instances by proving
the following corollaries.

\begin{cor}\label{C:CLT3}
  Let $W$ be a white noise on $Z$. Then \eqref{CLT3} holds in $\cD'(Z)$ whenever
  $n, m, N \to \infty$ in such a way that $nm/N^3 \to 0$.
\end{cor}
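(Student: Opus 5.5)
We deduce Corollary \ref{C:CLT3} from Theorem \ref{T:CLT} with $a = 3$ and $b = 1$, so that $\ka = \min\{3, 3\} = 3$. Theorem \ref{T:CLT} then gives $Z_{n, m, N}f \To \pi e^{Xf/2} W$ whenever $nm/N^3 \to 0$. It remains to show that $Z_{n, m, N}f$ differs from the left side of \eqref{CLT3} by a term tending to zero in $\cD'(Z)$. The limit can then be transferred by a Slutsky-type argument, applied pairing by pairing against each $g \in C_c^\infty(Z)$.

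First we write out the centering. With $b = 1$, the first sum is $-\frac 1 N e^{X_{n,m}f}$. With $a = 3$, the second sum has two terms. Put $\la = N e^{-X_{n,m}f}$ and use $\mu_2(\la) = \mu_3(\la) = \la$. Then
\begin{equation*}
  \sum_{r = 2}^3 \frac{(-1)^r \mu_r(\la)}{r(\la + 1)^r}
    = \frac{\la}{2(\la+1)^2} - \frac{\la}{3(\la+1)^3}.
\end{equation*}
Subtracting the centering therefore adds
\begin{equation*}
  \frac 1 N e^{X_{n,m}f} - \frac{\la}{2(\la+1)^2} + \frac{\la}{3(\la+1)^3}.
\end{equation*}
Now expand in powers of $1/\la$. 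We have $\la/(\la+1)^2 = \la^{-1} + O(\la^{-2})$ and $\la/(\la+1)^3 = O(\la^{-2})$. Since $\la^{-1} = e^{X_{n,m}f}/N$, the total correction is
\begin{equation*}
  \frac{1}{2N} e^{X_{n,m}f} + R_{n,m,N},
\end{equation*}
where $R_{n,m,N}$ is a simple function. Because $\la \ge N e^{-\|Xf\|_\infty}$, it satisfies $\|R_{n,m,N}\|_\infty \le C N^{-2}$ with $C$ depending only on $f$. This correction is exactly the term appearing in \eqref{CLT3}.

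Next we bound the difference. The left side of \eqref{CLT3} minus $Z_{n,m,N}f$ equals $-\sqrt{nmN}\, R_{n,m,N}$. For $g \in C_c^\infty(Z)$,
\begin{equation*}
  |\ang{\sqrt{nmN} R_{n,m,N}, g}| \le C \sqrt{nm} N^{-3/2} \|g\|_1 = C(nm/N^3)^{1/2}\|g\|_1,
\end{equation*}
which tends to $0$ deterministically. So the difference converges to zero in $\cD'(Z)$.

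Finally we conclude. Convergence in distribution in $\cD'(Z)$ with its weak topology can be checked through finite collections of pairings, or through characteristic functionals $E e^{i\ang{\cdot, g}}$. Adding a deterministic perturbation whose pairings vanish does not change the limit, by Slutsky's theorem applied to each finite vector of pairings. The only point needing care is this justification of Slutsky in $\cD'(Z)$. We expect it to be handled the same way the paper passes between centerings in Section \ref{S:CLTpf}. The Taylor bound for $R_{n,m,N}$ is routine.
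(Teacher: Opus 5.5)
Your proposal is correct and follows the paper's proof essentially verbatim. Like the paper, you apply Theorem \ref{T:CLT} with $a=3$, $b=1$, expand $\la/(\la+1)^2 = \la^{-1} + O(\la^{-2})$ and $\la/(\la+1)^3 = O(\la^{-2})$ to extract the correction $\frac{1}{2N}e^{X_{n,m}f}$ plus an $O(N^{-2})$ remainder, and observe that $\sqrt{nmN}$ times this remainder vanishes when $nm/N^3 \to 0$; the paper measures the remainder in $L^2(Z)$ rather than via your $L^\infty$/$\|g\|_1$ pairing bound, which is an immaterial difference. The Slutsky step you flag needs nothing new: the proof of Theorem \ref{T:CLT} already uses L\'evy's continuity theorem to reduce convergence in $\cD'(Z)$ to convergence of each scalar pairing $\ang{\cdot, g}$, and a deterministic perturbation of each pairing that tends to zero does not change its limit.
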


\begin{cor}\label{C:CLT5}
  Let $W$ be a white noise on $Z$. Then \eqref{CLT5} holds in $\cD'(Z)$ whenever
  $n, m, N \to \infty$ in such a way that $nm/N^5 \to 0$.
\end{cor}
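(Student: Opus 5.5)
We obtain Corollary~\ref{C:CLT5} by applying Theorem~\ref{T:CLT} with $a = 5$ and $b = 2$, so that $\ka = \min\{5, 5\} = 5$. Under the hypothesis $nm/N^5 \to 0$, Theorem~\ref{T:CLT} then gives $Z_{n, m, N}f \To \pi e^{Xf/2} W$. The remaining work is to show that the deterministic centering in $Z_{n, m, N}f$ agrees with the centering in \eqref{CLT5} up to an error $R_{n, m, N}$ satisfying $\sqrt{nmN}\,\|R_{n, m, N}\|_\infty \to 0$.

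Write $X = X_{n, m}f$ pointwise and $\la = N e^{-X}$. Since $0 \le X \le \|Xf\|_\infty$, we have $\la \ge cN$ with $c = e^{-\|Xf\|_\infty} > 0$, and this bound is uniform on $Z$.

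The $b$-sum is
\begin{equation*}
  \sum_{r = 1}^2 \frac{(-1)^r}{rN^r} e^{rX}
    = -\frac{e^X}{N} + \frac{e^{2X}}{2N^2}.
\end{equation*}
Using $\mu_2, \ldots, \mu_5$ from the list above, the $a$-sum is
\begin{equation*}
  \frac{\la}{2(\la+1)^2} - \frac{\la}{3(\la+1)^3}
    + \frac{3\la^2+\la}{4(\la+1)^4} - \frac{10\la^2+\la}{5(\la+1)^5}.
\end{equation*}
We expand each term in powers of $1/\la$:
\begin{itemize}
  \item the first term is $\frac1{2\la} - \frac1{\la^2} + O(\la^{-3})$;
  \item the second is $-\frac1{3\la^2} + O(\la^{-3})$;
  \item the third is $\frac3{4\la^2} + O(\la^{-3})$;
  \item the fourth is $O(\la^{-3})$.
\end{itemize}
Summing, the $a$-sum equals $\frac1{2\la} - \frac7{12\la^2} + O(\la^{-3})$. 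Substituting $1/\la = e^X/N$, the full centering becomes
\begin{equation*}
  Y - X + \frac{e^X}N - \frac{e^{2X}}{2N^2} - \frac{e^X}{2N} + \frac{7e^{2X}}{12N^2} + R
    = Y - X + \frac{e^X}{2N} + \frac{e^{2X}}{12N^2} + R,
\end{equation*}
which is exactly the centering in \eqref{CLT5}.

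The step that needs care is making the remainder bound $R = O(\la^{-3})$ uniform in $y$. Each term is a rational function of $\la$ that is analytic at $\la = \infty$, so for $\la \ge 1$ its Taylor remainder in $1/\la$ is bounded by an absolute constant times $\la^{-3}$. Combined with $\la \ge cN$, this gives $\|R\|_\infty \le CN^{-3}$ with $C$ depending only on $f$. Hence
\begin{equation*}
  \sqrt{nmN}\,\|R\|_\infty \le C\sqrt{nm/N^5} \to 0.
\end{equation*}

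To conclude, note that the left side of \eqref{CLT5} equals $Z_{n, m, N}f - \sqrt{nmN}\,R$. For any $g \in C_c^\infty(Z)$, we have $|\ang{\sqrt{nmN}\,R, g}| \le \sqrt{nmN}\,\|R\|_\infty \|g\|_1 \to 0$ deterministically. Thus the deterministic perturbation tends to zero in $\cD'(Z)$. By Slutsky's theorem (applied to finite families of test functions, which determine convergence in distribution in $\cD'(Z)$ as in the proof of Theorem~\ref{T:CLT}), the limit $\pi e^{Xf/2} W$ transfers to the left side of \eqref{CLT5}, which proves the corollary.
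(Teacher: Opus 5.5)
Your proposal is correct and follows the paper's proof. You apply Theorem~\ref{T:CLT} with $a=5$, $b=2$, expand the $\mu_r$-terms to order $\la^{-3}$ to recover the coefficients $\frac{1}{2N}$ and $\frac{1}{12N^2}$, and absorb the uniform $O(N^{-3})$ remainder, which is $O(\sqrt{nm/N^5})$ after the $\sqrt{nmN}$ scaling. The only cosmetic differences are that you bound the remainder in $L^\infty$ rather than $L^2$, and you spell out the final Slutsky step, which the paper leaves implicit.
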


Corollary \ref{C:CLT3} is obtained from Theorem \ref{T:CLT} by taking $a = 3$
and $b = 1$, which gives $\ka = 3$. A priori, it seems like we could obtain an
even simpler result by taking $a = 2$ and $b = 1$, which would give $\ka = 2$.
But as explained at the end of Section \ref{S:CLTpf}, this would only prove the
same result as \eqref{CLT3}, but under the weaker condition that $nm/N^2 \to 0$.
Hence, there is no benefit to taking $\ka = 2$. In fact, there is no benefit to
taking $\ka$ to be any even integer, so that $\ka = 5$ in Corollary \ref{C:CLT5}
is indeed the next non-redundant instance.

\subsection{A Berry-Esseen inequality}

The proofs of Theorems \ref{T:LLN} and \ref{T:CLT} use a combination of two
Taylor expansions to decompose $Y_{n, m, N}f$ into a sum of several terms. For
the central limit theorem, the most important term is
\begin{equation}\label{discWN}
  W_{n, m, N}f = \sqrt{nmN} \, \sum_{j, k} \frac{
    Np_{j, k} - S^{y_{j, k}}f
  }{
    Np_{j, k} + 1
  } \, 1_{A_{j, k}}.
\end{equation}
It is this term which converges to $\pi e^{Xf/2} \, W$ as $n, m, N \to \infty$.
The difference between it and $Z_{n, m, N}f$ tends to $0$, and it is this
difference which requires the extra condition $nm/N^\ka \to 0$.

We prove that $W_{n, m, N}f \To \pi e^{Xf/2} \, W$ by using Berry-Esseen
inequalities of the type found, for example, in \cite{Petrov1975}. To apply
these bounds, we must define and study two key parameters associated with $W_{n,
m, N}f$. For the first, let $g \in C_c^\infty(Z)$. Then $\ang{W_{n, m, N}f, g}$
is a sum of independent, mean zero random variables. Hence, its variance is
\begin{equation}\label{Wvar}
  \si_{n, m, N}^2 = \sum_{j, k} {
    \frac{nmN^2 p_{j, k}}{(Np_{j, k} + 1)^2} \, \ga(A_{j, k})^2
  },
\end{equation}
where $\ga$ is the signed measure defined by $d\ga = g \, d\nu$. The second
parameter we need is
\begin{equation}\label{Wskew}
  L_{n, m, N} = \si_{n, m, N}^{-3} \sum_{j, k} {
    \frac {
      (nmN)^{3/2} E|S^{y_{j, k}}f - Np_{j, k}|^3
    } {
      (Np_{j, k} + 1)^3
    } |\ga(A_{j, k})|^3
  }.
\end{equation}
By \cite[Theorem V.3]{Petrov1975} and \cite{Shevtsova2014}, we have the
Berry-Esseen inequality,
\begin{equation}\label{BEraw}
  |P(\si_{n, m, N}^{-1} \ang{W_{n, m, N}f, g} \le x) - \Phi(x)|
    \le 0.5583 L_{n, m, N},
\end{equation}
for all $x \in \bR$, where $\Phi$ is the standard normal distribution function.
With this as our starting point, we derive in Section \ref{S:CLTpf} the
inequality,
\begin{equation}\label{BE}
  |P(\ang{Z_{n, m, N}f, g} \le x) - P(\|\pi e^{Xf/2} g\|_2 \eta \le x)|
    \le C \left(
      \frac 1 {\sqrt{nm}} + \frac 1 n + \frac 1 m + \frac 1 {N^{1/3}}
      + \left( \frac {nm} {N^\ka} \right)^{1/3}
    \right),
\end{equation}
where $\eta$ is a standard normal and $C$ is a constant that depends only on
$a$, $b$, $f$, and $g$. By tracing through the various proofs, the constant $C$
in \eqref{BE} can be computed in terms of the $L^\infty$ norms and Lipschitz
constants of $f$ and $g$. In this way, one obtains a Berry-Esseen-type
inequality for $Z_{n, m, N}f$.

\section{Proving the functional law of large numbers}\label{S:LLNpf}

The proofs of Theorems \ref{T:LLN} and \ref{T:CLT} use a combination of two
different Taylor expansions to produce a single expansion of $Y_{n, m, N}f$ that
has two integral remainder terms. One remainder is deterministic and the other
is stochastic. Lemma \ref{L:remEst} below controls the stochastic remainder and
is used both here and in Section \ref{S:CLTpf}.

To prove Lemma \ref{L:remEst}, we need of the following large deviations
inequality.

\begin{lemma}\label{L:PoisLD}
  If $S \sim \Pois(\la)$, then
  \begin{equation*}
    P(S \le \la/2) \le \exp\left(-\frac{1 - \log 2}2 \, \la\right).
  \end{equation*}
\end{lemma}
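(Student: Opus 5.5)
We prove this with the standard exponential Chernoff bound for the lower tail. Fix $t > 0$. Since $S \le \la/2$ implies $e^{-tS} \ge e^{-t\la/2}$, Markov's inequality gives
\begin{equation*}
  P(S \le \la/2) \le e^{t\la/2} E[e^{-tS}].
\end{equation*}
The Poisson moment generating function is $E[e^{-tS}] = \exp(\la(e^{-t} - 1))$. Substituting, we obtain
\begin{equation*}
  P(S \le \la/2) \le \exp\bigl(\la\,(t/2 + e^{-t} - 1)\bigr)
\end{equation*}
for every $t > 0$.

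Next we minimize the exponent over $t$. The function $h(t) = t/2 + e^{-t} - 1$ has derivative $1/2 - e^{-t}$, which vanishes at $t = \log 2 > 0$. At this point $h(\log 2) = (\log 2)/2 + 1/2 - 1 = -(1 - \log 2)/2$. Plugging $t = \log 2$ into the bound gives exactly
\begin{equation*}
  P(S \le \la/2) \le \exp\left(-\frac{1 - \log 2}2 \, \la\right).
\end{equation*}

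There is no real obstacle here. The only points needing care are the direction of the inequality when we use $e^{-tS}$ (a lower-tail event requires a negative exponent) and the evaluation of the moment generating function, $E[e^{-tS}] = \sum_k e^{-\la}\la^k e^{-tk}/k! = e^{\la(e^{-t}-1)}$. The case $\la = 0$ is trivial, since both sides equal $1$.
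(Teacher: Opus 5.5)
Your proof is correct and takes the same approach as the paper: a Chernoff bound with $E[e^{-tS}] = e^{\la(e^{-t}-1)}$, optimized at $t = \log 2$. The only difference is that the paper first derives $P(S \le a\la) \le e^{-\la(a\log a - a + 1)}$ for general $a \in (0,1)$ and then sets $a = 1/2$, whereas you specialize to $a = 1/2$ from the start.
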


\begin{proof}
  For $t > 0$ and $a \in (0, 1)$, we have
  \begin{equation*}
    e^{\la(e^{-t} - 1)} = E[e^{-tS}] \ge E[e^{-tS} 1_{\{S \le a \la\}}]
    \ge e^{-a \la t} P(S \le a \la).
  \end{equation*}
  Hence, $P(S \le a \la) \le e^{\la(a t + e^{-t} - 1)}$. The right-hand side is
  minimized when $t = -\log a$, or $a = e^{-t}$. Thus,
  \begin{equation*}
    P(S \le a \la) \le e^{-\la(a \log a - a + 1)}.
  \end{equation*}
  Taking $a = 1/2$ finishes the proof.
\end{proof}

\begin{lemma}\label{L:remEst}
  Let $a \in \bN$ and define
  \begin{equation*}
    R = \sum_{j, k} \int_{Np_{j, k}}^{S^{y_{j, k}}f} {
      \frac {(S^{y_{j, k}}f - t)^a} {(t + 1)^{a + 1}}
    } \, dt \, 1_{A_{j, k}}.
  \end{equation*}
  Then there exists a constant $C$ that depends only on $a$ and $f$ such that
  $\|R\|_{L^2(S \times \Om)} \le C N^{-(a + 1)/2}$.
\end{lemma}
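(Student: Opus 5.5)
Since the sets $A_{j,k}$ are disjoint with $\nu(A_{j,k}) = \pi^2/nm$, we have
\begin{equation*}
  \|R\|_{L^2(S\times\Om)}^2 = \frac{\pi^2}{nm}\sum_{j,k} E[R_{j,k}^2],
  \qquad
  R_{j,k} = \int_{Np_{j,k}}^{S} \frac{(S-t)^a}{(t+1)^{a+1}}\,dt,
\end{equation*}
where we abbreviate $S = S^{y_{j,k}}f$. There are $nm$ terms in the sum, so it suffices to prove the uniform bound $E[R_{j,k}^2] \le C N^{-(a+1)}$, with $C$ depending only on $a$ and $f$. Throughout, write $\la = Np_{j,k}$. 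We have $\la \ge N e^{-\|Xf\|_\infty}$, so $\la$ is comparable to $N$ with constants depending only on $f$.

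\textbf{Step 1 (pointwise bound).} For $t$ between $\la$ and $S$ we have $|S-t| \le |S-\la|$, hence
\begin{equation*}
  |R_{j,k}| \le |S-\la|^{a} \left| \int_\la^S (t+1)^{-(a+1)}\,dt \right|.
\end{equation*}
We split according to the event $G = \{S > \la/2\}$.

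\textbf{Step 2 (the good event).} On $G$, every $t$ between $\la$ and $S$ satisfies $t+1 \ge \la/2$. Therefore
\begin{equation*}
  |R_{j,k}| \le |S-\la|^{a+1} (\la/2)^{-(a+1)}.
\end{equation*}
Squaring, taking expectations and applying Lemma \ref{L:PoisMom} with $r = 2a+2$ gives
\begin{equation*}
  E[R_{j,k}^2 1_G] \le C \la^{-2a-2} N^{a+1} \le C N^{-(a+1)}.
\end{equation*}

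\textbf{Step 3 (the bad event).} On $G^c$ we have $0 \le S \le \la/2$. There $|S-\la| \le \la$, and $\int_S^\la (t+1)^{-(a+1)}\,dt \le 1/a$ because $S \ge 0$. Hence $|R_{j,k}| \le \la^a/a$, and
\begin{equation*}
  E[R_{j,k}^2 1_{G^c}] \le C \la^{2a}\, P(S \le \la/2) \le C \la^{2a} e^{-c\la},
  \qquad c = \tfrac{1-\log 2}{2},
\end{equation*}
by Lemma \ref{L:PoisLD}. Since $\la \ge N e^{-\|Xf\|_\infty}$, this exponentially small term is bounded by $C N^{-(a+1)}$.

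The main obstacle is the bad event in Step 3. The denominator $(t+1)^{a+1}$ is only bounded away from zero by $1$ there, so the naive bound is polynomially large in $N$. It is precisely the $+1$ in the normalization that keeps the integral finite when $S = 0$, and the large deviations estimate of Lemma \ref{L:PoisLD} then kills the polynomial factor $\la^{2a}$.

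Adding Steps 2 and 3 gives $E[R_{j,k}^2] \le C N^{-(a+1)}$ uniformly in $j,k$. Summing over the $nm$ cells then yields $\|R\|_{L^2(S\times\Om)}^2 \le \pi^2 C N^{-(a+1)}$, which is the claim.
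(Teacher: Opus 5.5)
Your proof is correct and follows the paper's argument: you use the same split on $\{S > Np/2\}$, Lemma \ref{L:PoisMom} with $r = 2a+2$ on the good event, Lemma \ref{L:PoisLD} on the bad event, and the same summation over cells using $\nu(A_{j,k}) = \pi^2/nm$. The only difference is on the bad event. There you use the deterministic bound $|R_{j,k}| \le \la^a/a$, whereas the paper bounds $|U| \le C|S-Np|^{a+1}$, controls $E|U|^4$ via Lemma \ref{L:PoisMom}, and applies Cauchy--Schwarz; your route is slightly more elementary but reaches the same conclusion.
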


\begin{proof}
  Let us define
  \begin{equation*}
    U_{j, k} = \int_{Np_{j, k}}^{S^{y_{j, k}}f} {
      \frac {(S^{y_{j, k}}f - t)^a} {(t + 1)^{a + 1}} \, dt
    }.
  \end{equation*}
  For notational simplicity, let $S = S^{y_{j, k}}f$, $p = p_{j, k}$, and $U =
  U_{j, k}$. If $S > Np/2$, then
  \begin{equation*}
    |U| \le \left(\frac 2 {Np}\right)^{a + 1} \int_{Np}^S |S - t|^a \, dt
      = \frac {C |S - Np|^{a + 1}} {N^{a + 1} p^{a + 1}}
  \end{equation*}
  Hence, by Lemma \ref{L:PoisMom},
  \begin{equation*}
    E[|U|^2 1_{\{S > Np/2\}}]
      \le \frac C {N^{2a + 2} p^{2a + 2}} \, E|S - Np|^{2a + 2}
      \le \frac C {N^{a + 1} p^{a + 1}}.
  \end{equation*}
  On the other hand,
  \begin{equation*}
    |U| \le \int_{Np}^S |S - t|^a \, dt = C |S - Np|^{a + 1}.
  \end{equation*}
  Thus,
  \begin{equation*}
    E|U|^4 \le C \, E|S - Np|^{4a + 4} \le C N^{2a + 2} p^{2a + 2},
  \end{equation*}
  which, by Lemma \ref{L:PoisLD}, gives
  \begin{align*}
    E[|U|^2 1_{\{S \le Np/2\}}] &\le (E|U|^4)^{1/2} P(S \le Np/2)^{1/2}\\
    &\le C N^{a + 1} p^{a + 1} \exp\left(-\frac {1 - \log 2} 4 \, Np\right)\\
    &\le \frac C {N^{a + 1} p^{a + 1}}
  \end{align*}
  Combining the two gives $E|U|^2 \le C/N^{a + 1} p^{a + 1}$.

  We therefore have
  \begin{multline*}
    \|R\|_{L^2(S \times \Om)}^2
    = E \int |R|^2 \, d\nu
    = E \int \sum_{j, k} U_{j, k}^2 1_{A_{j, k}} \, d\nu
    = \sum_{j, k} (E U_{j, k}^2) \nu(A_{j, k})\\
    \le \frac C {N^{a + 1}} \sum_{j, k} e^{(a + 1)Xf(y_{j, k})} \, \nu(A_{j, k})
    = \frac C {N^{a + 1}} \int e^{(a + 1)X_{n, m}f} \, d\nu
    \le \frac C {N^{a + 1}}.
  \end{multline*}
  Taking square roots finishes the proof.
\end{proof}

With Lemma \ref{L:remEst} in hand, we can now prove our functional law of large
numbers.

\begin{proof}[Proof of Theorem \ref{T:LLN}]
  We will use the Taylor expansion,
  \begin{equation*}
    \log x = \log c + \sum_{r = 1}^a {
      (-1)^{r - 1} \frac {(x - c)^r} {r c^r}
    } + (-1)^a \int_c^x \frac {(x - u)^a} {u^{a + 1}} \, du.
  \end{equation*}
  For notational simplicity, let $S = S^{y_{j, k}}f$ and $p = p_{j, k}$. We then
  have
  \begin{equation*}
    \log \left( \frac {S + 1} N \right) - \log \left( p + \frac 1 N \right)
      = \sum_{r = 1}^a {
        (-1)^{r - 1} \frac {(S - Np)^r} {r (Np + 1)^r}
      } + (-1)^a \int_{Np}^S \frac {(S - t)^a} {(t + 1)^{a + 1}} \, dt
  \end{equation*}
  and
  \begin{equation*}
    \log \left( p + \frac 1 N \right) - \log p = \sum_{r = 1}^b {
      (-1)^{r - 1} \frac 1 {r N^r p^r}
    } + (-1)^b \int_{Np - 1}^{Np} \frac {(Np - t)^b} {(t + 1)^{b + 1}} \, dt,
  \end{equation*}
  where in the integral remainders, we have made the substitution $t = Nu - 1$.
  Combining these, we have
  \begin{multline}\label{Taylor}
    - \log \left( \frac {S + 1} N \right) = - \log p + \sum_{r = 1}^a {
      (-1)^r \frac {(S - Np)^r} {r (Np + 1)^r}
    } + \sum_{r = 1}^b {
      (-1)^r \frac 1 {r N^r p^r}
    }\\
    + (-1)^{b + 1} \int_{Np - 1}^{Np} {
      \frac {(Np - t)^b} {(t + 1)^{b + 1}}
    } \, dt + (-1)^{a + 1} \int_{Np}^S {
      \frac {(S - t)^a} {(t + 1)^{a + 1}}
    } \, dt
  \end{multline}
  Taking $a = 1$ and $b = 0$ in \eqref{Taylor}, we have
  \begin{equation*}
    - \log \left( \frac {S + 1} N \right) = - \log p + \frac {Np - S} {Np + 1}
      - \int_{Np - 1}^{Np} {
        \frac 1 {t + 1}
      } \, dt + \int_{Np}^S {
        \frac {S - t} {(t + 1)^2}
      } \, dt
  \end{equation*}
  Substituting this into \eqref{Ydef} gives
  \begin{equation*}
    Y_{n, m, N}f = X_{n, m}f + \frac1{\sqrt{nmN}} \, W_{n, m, N}f - R_1 + R_2,
  \end{equation*}
  where $W_{n, m, N}f$ is given by \eqref{discWN},
  \begin{align*}
    R_1 &= \sum_{j, k} \int_{Np_{j, k} - 1}^{Np_{j, k}} {
      \frac 1 {t + 1}
    } \, dt \, 1_{A_{j, k}}, \text{ and}\\
    R_2 &= \sum_{j, k} \int_{Np_{j, k}}^{S^{y_{j, k}}f} {
      \frac {S^{y_{j, k}}f - t} {(t + 1)^2}
    } \, dt \, 1_{A_{j, k}}.
  \end{align*}
  For the first term, we have
  \begin{align*}
    \left\|
      \frac1{\sqrt{nmN}} \, W_{n, m, N}f
    \right\|_{L^2(S \times \Om)}^2 &= E\left\|
      \frac 1 {\sqrt{nmN}} \, W_{n, m, N}f
    \right\|_2^2\\
    &= E \int \bigg|
      \sum_{j, k} \frac {Np - S} {Np + 1} \, 1_{A_{j, k}}
    \bigg|^2 \, d\nu\\
    &= E \int \sum_{j, k} {
      \frac {|Np - S|^2} {(Np + 1)^2} \, 1_{A_{j, k}}
    } \, d\nu.
  \end{align*}
  Since $S \sim \Pois(Np)$, this gives
  \begin{align*}
    \left\|
      \frac 1 {\sqrt{nmN}} \, W_{n, m, N}f
    \right\|_{L^2(S \times \Om)}^2 &= \sum_{j, k} {
      \frac {Np} {(Np + 1)^2} \, \nu(A_{j, k})
    }\\
    &\le \frac 1 N \sum_{j, k} {
      e^{Xf(y_{j, k})} \nu(A_{j, k})
    }\\
    &= \frac 1 N \int e^{X_{n, m}f} \, d\nu.
  \end{align*}
  Therefore,
  \begin{equation*}
    \left\|
      \frac 1 {\sqrt{nmN}} \, W_{n, m, N}f
    \right\|_{L^2(S \times \Om)} \le \frac C {\sqrt{N}}.
  \end{equation*}
  For the first remainder term, we have
  \begin{equation*}
    \|R_1\|_{L^2(S \times \Om)}^2 = \|R_1\|_2^2 \le \sum_{j, k} {
      \frac 1 {N^2 p_{j, k}^2} \, \nu(A_{j, k})
    } = \frac 1 {N^2} \int e^{2X_{n, m}f} \, d\nu,
  \end{equation*}
  so that $\|R_1\|_{L^2(S \times \Om)} \le C/N$. For the second remainder term,
  we apply Lemma \ref{L:remEst} with $a = 1$ to obtain $\|R_2\|_{L^2(S \times
  \Om)} \le C/N$.
\end{proof}

\section{Asymptotics of the Berry-Esseen parameters}\label{S:BEparams}

The proof of our functional central limit is built upon the Berry-Esseen
inequality \eqref{BEraw}. To make use of this, we must understand the
asymptotics of the two parameters, $\si_{n, m, N}$ and $L_{n, m, N}$, given by
\eqref{Wvar} and \eqref{Wskew}. These asymptotics are established in
Propositions \ref{P:variance} and \ref{P:skew} below, which show that $\si_{n,
m, N} \approx \|\pi e^{Xf/2} g\|_2$ and $L_{n, m, N} \approx C/\sqrt{nm}$. Both
of these results are consequences of the following lemma.

\begin{lemma}\label{L:weakSum}
  Let $\ka \in (0, \infty)$ and let $h: [0, \infty) \to [0, \infty)$ be a
  bounded function that satisfies $h(\la) = 1 + O(\la^{-\ka})$ as $\la \to
  \infty$. Let $r \ge 1$ and $s > 0$. Then there exists a constant $C$ that
  depends only on $r$, $s$, $f$, $g$, $h$, and $\ka$ such that
  \begin{equation*}
    \bigg|
      \sum_{j, k} {
        (nm)^{r - 1} h(Np_{j, k}) p_{j, k}^{-s} |\ga(A_{j, k})|^r
      } - \pi^{2r - 2} \int e^{sXf} |g|^r \, d\nu
    \bigg| \le C \left(
      \frac 1 n + \frac 1 m + \frac 1 {N^\ka}
    \right).
  \end{equation*}
\end{lemma}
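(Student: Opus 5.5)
The plan is to compare the sum term by term with a Riemann-type sum, using $\nu(A_{j,k}) = \pi^2/nm$. Write $\ga(A_{j,k}) = \int_{A_{j,k}} g\,d\nu$, and let $y_{j,k}$ be the chosen corner. The proof proceeds in three replacements, each costing one of the three error terms.

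\emph{Step 1 (removing $h$).} Since $h$ is bounded and $h(\la) = 1 + O(\la^{-\ka})$, there is a constant $C$ depending on $h$ and $\ka$ with $|h(\la) - 1| \le C\la^{-\ka}$ for all $\la > 0$. For small $\la$ this holds because $h$ is bounded and $\la^{-\ka}$ is large. Because $p_{j,k} \ge e^{-\|Xf\|_\infty}$, we get $|h(Np_{j,k}) - 1| \le C N^{-\ka}$ uniformly in $j,k$. We also have $|\ga(A_{j,k})| \le \|g\|_\infty \pi^2/nm$ and $p_{j,k}^{-s} \le e^{s\|Xf\|_\infty}$. Hence the sum of the error terms is at most
\begin{equation*}
  C N^{-\ka} \sum_{j,k} (nm)^{r-1} (nm)^{-r} \le C N^{-\ka}.
\end{equation*}

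\emph{Step 2 (replacing $|\ga(A_{j,k})|^r$ by $|g(y_{j,k})|^r \nu(A_{j,k})^r$).} Since $g \in C_c^\infty(Z)$ is Lipschitz and $\operatorname{diam} A_{j,k} \le C(1/n + 1/m)$, as in the proof of Proposition \ref{P:stepEst}, we have $|\ga(A_{j,k}) - g(y_{j,k})\nu(A_{j,k})| \le C(1/n + 1/m)\,\pi^2/nm$. The map $x \mapsto |x|^r$ with $r \ge 1$ is Lipschitz on bounded sets, so
\begin{equation*}
  \bigl| |\ga(A_{j,k})|^r - |g(y_{j,k})|^r (\pi^2/nm)^r \bigr| \le C (1/n + 1/m) (nm)^{-r}.
\end{equation*}
Multiplying by $(nm)^{r-1} p_{j,k}^{-s}$ and summing over the $nm$ cells gives an error of at most $C(1/n + 1/m)$.

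\emph{Step 3 (recognising the integral).} After Steps 1 and 2, the remaining sum is
\begin{equation*}
  \pi^{2r-2} \sum_{j,k} e^{sXf(y_{j,k})} |g(y_{j,k})|^r \nu(A_{j,k}) = \pi^{2r-2} \int e^{sX_{n,m}f} |g_{n,m}|^r \, d\nu,
\end{equation*}
where $g_{n,m} = \sum_{j,k} g(y_{j,k}) 1_{A_{j,k}}$. By Proposition \ref{P:stepEst}, $\|X_{n,m}f - Xf\|_\infty \le C(1/n + 1/m)$. The same diameter argument gives the analogous bound for $\|g_{n,m} - g\|_\infty$. Since $e^{sx}$ and $|x|^r$ are Lipschitz on the relevant bounded ranges and $\nu(Z) = \pi^2 < \infty$, the integral differs from $\pi^{2r-2}\int e^{sXf}|g|^r\,d\nu$ by at most $C(1/n + 1/m)$.

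\emph{Main obstacle.} The delicate point is Step 2 near the boundary $s \to 1$, where the density $(1-s^2)^{-1/2}$ of $\nu$ blows up. What keeps the argument working is that each cell has the same $\nu$-mass exactly, which is the reason for the grid choice $s_j = \sin(\pi j/2n)$. The approximation $\ga(A_{j,k}) \approx g(y_{j,k})\nu(A_{j,k})$ only uses $\sup_{A_{j,k}} |g - g(y_{j,k})| \cdot \nu(A_{j,k})$, so it needs Lipschitz continuity in the Euclidean coordinates together with $\De s_j \le \pi/2n$, and nothing more. Compact support of $g$ would also give this directly. All the constants collected above depend only on $r$, $s$, $f$, $g$, $h$ and $\ka$, as the statement requires.
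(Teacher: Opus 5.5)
Your proposal is correct and is essentially the paper's proof. The paper splits the difference into the same three errors: $\ep_1$ replaces $|\ga(A_{j,k})|^r$ by $|g(y_{j,k})|^r\nu(A_{j,k})^r$ via $|a^r-b^r|\le r(a\vee b)^{r-1}|a-b|$, $\ep_2$ is the Riemann-sum error handled by Lipschitz continuity of $e^{sXf}|g|^r$, and $\ep_3$ is the $h-1$ term. The only difference is that the paper keeps the bounded factor $h(Np_{j,k})$ in the first two errors and removes it last, whereas you remove it first.

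One point in your Step 2 should be made explicit. The Lipschitz constant of $x\mapsto|x|^r$ must be taken on $[-M,M]$ with $M=\|g\|_\infty\pi^2/nm$, or equivalently you should factor out $\nu(A_{j,k})^r$ first. On a fixed bounded set you would only get an error of order $(1/n+1/m)(nm)^{-1}$ per cell, and that does not survive the factor $(nm)^{r-1}$ when $r>1$.
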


\begin{proof}
  Let us write
  \begin{equation}\label{weakSum1}
    \sum_{j, k} {
      (nm)^{r - 1} h(Np_{j, k}) p_{j, k}^{-s} |\ga(A_{j, k})|^r
    } - \pi^{2r - 2} \int e^{sXf} |g|^r \, d\nu = \ep_1 + \ep_2 + \ep_3,
  \end{equation}
  where
  \begin{align*}
    \ep_1 &= \sum_{j, k} (nm)^{r - 1} h(Np_{j, k}) p_{j, k}^{-s} (
      |\ga(A_{j, k})|^r - |g(y_{j, k})|^r \nu(A_{j, k})^r
    ),\\
    \ep_2 &= \pi^{2r - 2} \sum_{j, k} h(Np_{j, k})\int_{A_{j, k}} (
      e^{sXf(y_{j, k})} |g(y_{j, k})|^r - e^{sXf} |g|^r
    ) \, d\nu,\\
    \ep_3 &= \pi^{2r - 2} \sum_{j, k} \bigg(
      \int_{A_{j, k}} e^{sXf} |g|^r \, d\nu
    \bigg) (h(Np_{j, k}) - 1).
  \end{align*}
  For $r \ge 1$ and $a, b > 0$, the mean value theorem implies $|a^r - b^r| \le
  r (a \vee b)^{r - 1} |a - b|$. Let us apply this with $a = |\int_{A_{j, k}} g
  \, d\nu|$ and $b = |g(y_{j, k})| \nu(A_{j, k})$. Note that $a \vee b \le
  \|g\|_\infty \nu(A_{j, k}) = \|g\|_\infty \pi^2/nm$. Since $g$ is Lipschitz
  and the diameter of $A_{j, k}$ is bounded above by $\De s_j + 2\pi/m$, we have
  \begin{align*}
    \left|
      |\ga(A_{j, k})|^r - |g(y_{j, k})|^r \nu(A_{j, k})^r
    \right| &\le \frac C {(nm)^{r - 1}} \left|
      |\ga(A_{j, k})| - |g(y_{j, k})| \nu(A_{j, k})
    \right|\\
    &\le \frac C {(nm)^{r - 1}} \left|
      \ga(A_{j, k}) - g(y_{j, k}) \nu(A_{j, k})
    \right|\\
    &\le \frac C {(nm)^{r - 1}} \int_{A_{j, k}} |g - g(y_{j, k})| \, d\nu\\
    &\le \frac C {(nm)^r} \left(
      \De s_j + \frac {2\pi} m
    \right).
  \end{align*}
  This gives
  \begin{equation*}
    |\ep_1| \le \frac C {(nm)^r} \sum_{j, k} (nm)^{r - 1} \left(
      \De s_j + \frac {2\pi} m
    \right) = C \left( \frac {m + 2\pi n} {nm} \right) \le C \left(
      \frac 1 n + \frac 1 m
    \right).
  \end{equation*}
  Next, note that $e^{sXf}|g|^r$ is Lipschitz. Hence,
  \begin{equation*}
    |e^{sXf(y_{j, k})} |g(y_{j, k})|^r - e^{sXf(y)} |g(y)|^r|
      \le C \left(\De s_j + \frac {2\pi} m\right).
  \end{equation*}
  Therefore,
  \begin{equation*}
    |\ep_2| \le \frac C {nm} \sum_{j, k} \left(
      \De s_j + \frac {2\pi} m
    \right) = C \left( \frac {m + 2\pi n} {nm} \right) \le C \left(
      \frac 1 n + \frac 1 m
    \right).
  \end{equation*}
  Finally, using $p_{j, k} \ge e^{-\|Xf\|_\infty}$, we have
  \begin{equation*}
    |\ep_3| = \pi^{2r - 2} \sum_{j, k} \bigg(
      \int_{A_{j, k}} e^{sXf} |g|^r \, d\nu
    \bigg) |h(Np_{j, k}) - 1| \le \pi^{2r - 2} \int {
      e^{sXf} |g|^r \, d\nu
    } \, \frac {e^{\ka \|Xf\|_\infty}} {N^\ka} = \frac C {N^\ka}.
  \end{equation*}
  Putting these estimates into \eqref{weakSum1} completes the proof.
\end{proof}

\begin{prop}\label{P:variance}
  There exists a constant $C$ that depends only on $f$ and $g$ such that
  \begin{equation*}
    |\si_{n, m, N}^2 - \|\pi e^{Xf/2} g\|_2^2| \le C \left(
      \frac 1 n + \frac 1 m + \frac 1 N
    \right).
  \end{equation*}
\end{prop}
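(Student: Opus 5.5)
The plan is to write $\si_{n, m, N}^2$ in exactly the form treated by Lemma \ref{L:weakSum}, with $r = 2$, $s = 1$, and a suitable function $h$, and then read off the bound with $\ka = 1$. Starting from \eqref{Wvar}, factor out $p_{j, k}^{-1}$:
\begin{equation*}
  \frac{nmN^2 p_{j, k}}{(Np_{j, k} + 1)^2}
    = nm \, \frac{(Np_{j, k})^2}{(Np_{j, k} + 1)^2} \, p_{j, k}^{-1}.
\end{equation*}
So with $h(\la) = \la^2/(\la + 1)^2$ we get
\begin{equation*}
  \si_{n, m, N}^2 = \sum_{j, k} (nm)^{2 - 1} h(Np_{j, k}) p_{j, k}^{-1} |\ga(A_{j, k})|^2 .
\end{equation*}
Here we use $\ga(A_{j, k})^2 = |\ga(A_{j, k})|^2$, since $\ga(A_{j, k})$ is real.

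Next we check that this $h$ satisfies the hypotheses of Lemma \ref{L:weakSum} with $\ka = 1$. It maps $[0, \infty)$ into $[0, 1]$, so it is nonnegative and bounded. Moreover
\begin{equation*}
  1 - h(\la) = \frac{2\la + 1}{(\la + 1)^2} \le \frac{2}{\la + 1},
\end{equation*}
so $h(\la) = 1 + O(\la^{-1})$ as $\la \to \infty$.

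Lemma \ref{L:weakSum} then gives
\begin{equation*}
  \left| \si_{n, m, N}^2 - \pi^2 \int e^{Xf} |g|^2 \, d\nu \right|
    \le C \left( \frac 1 n + \frac 1 m + \frac 1 N \right),
\end{equation*}
with $C$ depending only on $f$ and $g$, since $h$, $r$, $s$, and $\ka$ are now fixed. Finally, $\pi^2 \int e^{Xf} |g|^2 \, d\nu = \|\pi e^{Xf/2} g\|_2^2$, which is the claimed estimate.

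There is no real obstacle here. The only step needing care is choosing $h$ so that the prefactor in \eqref{Wvar} matches the form $(nm)^{r-1} h(Np) p^{-s}$ exactly, and checking its decay rate so that the $N$-error term comes out as $1/N$.
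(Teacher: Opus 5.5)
Your proposal is correct and is essentially the paper's proof: both write $\si_{n, m, N}^2$ in the form of Lemma \ref{L:weakSum} with $h(\la) = \la^2/(\la + 1)^2$, $r = 2$, $s = 1$, and $\ka = 1$. Your explicit bound $1 - h(\la) \le 2/(\la + 1)$ confirms that $h(\la) = 1 + O(\la^{-1})$, and the identification $\pi^2 \int e^{Xf} |g|^2 \, d\nu = \|\pi e^{Xf/2} g\|_2^2$ finishes the argument.
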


\begin{proof}
  Let $h(\la) = \la^2/(\la + 1)^2$. Since $h(\la) = 1 + O(\la^{-1})$, the result
  follows from Lemma \ref{L:weakSum} with $r = 2$ and $s = 1$.
\end{proof}

\begin{prop}\label{P:skew}
  There exists a constant $C$ that depends only on the functions $f$ and $g$
  such that $L_{n, m, N} \le C/\sqrt{nm}$.
\end{prop}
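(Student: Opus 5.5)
The plan is to bound the numerator and the denominator of $L_{n,m,N}$ separately, using only crude estimates that hold for every $n$, $m$ and $N$. This leaves a single quantity depending only on $(n,m)$ to control, and Lemma \ref{L:weakSum} handles it outside a finite set of pairs $(n,m)$. Throughout, write $p = p_{j,k}$ and recall that $c_0 := e^{-\|Xf\|_\infty} \le p \le 1$ and $N \ge 1$.

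\emph{Numerator.} By Lemma \ref{L:PoisMom} with $r = 3$, we have $E|S^{y_{j,k}}f - Np|^3 \le C N^{3/2}p^{3/2}$. Since $(Np+1)^3 \ge N^3p^3$, each summand in \eqref{Wskew} is at most
\[
  C (nm)^{3/2} p^{-3/2} |\ga(A_{j,k})|^3 \le C (nm)^{3/2} |\ga(A_{j,k})|^3 .
\]
Next use $|\ga(A_{j,k})| \le \|g\|_\infty \nu(A_{j,k}) = \|g\|_\infty \pi^2/nm$ to pull out one factor. This gives
\[
  \sum_{j,k} \frac{(nmN)^{3/2} E|S^{y_{j,k}}f - Np|^3}{(Np+1)^3}\,|\ga(A_{j,k})|^3 \le C (nm)^{1/2} \sum_{j,k} \ga(A_{j,k})^2 = C (nm)^{-1/2} Q_{n,m},
\]
where $Q_{n,m} := nm \sum_{j,k} \ga(A_{j,k})^2$.

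\emph{Denominator.} Since $Np + 1 \le 2N$, we get $N^2 p/(Np+1)^2 \ge c_0/4$. Hence, by \eqref{Wvar}, $\si_{n,m,N}^2 \ge (c_0/4)\, Q_{n,m}$ uniformly in $N$. Combining the two bounds,
\[
  L_{n,m,N} \le \frac{C (nm)^{-1/2} Q_{n,m}}{(c_0/4)^{3/2} Q_{n,m}^{3/2}} = \frac{C}{\sqrt{nm}}\, Q_{n,m}^{-1/2}.
\]
It therefore suffices to show that $Q_{n,m}$ is bounded below by a positive constant depending only on $f$ and $g$, uniformly in $n$ and $m$.

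\emph{Lower bound on $Q_{n,m}$.} This is the only delicate step. Apply Lemma \ref{L:weakSum} with $h \equiv 1$, $r = 2$, $s = 0$ (the condition $s > 0$ there is only cosmetic; the same proof works for $s = 0$, or one may take $h(\la) = 1$ and $s$ small and absorb $p^{-s} \in [1, c_0^{-s}]$). This gives
\[
  |Q_{n,m} - \pi^2 \|g\|_2^2| \le C(1/n + 1/m).
\]
So there is $M$, depending only on $g$, such that $Q_{n,m} \ge \pi^2\|g\|_2^2/2$ whenever $n, m \ge M$; here we assume $g \not\equiv 0$, since otherwise $\si_{n,m,N} = 0$ and $L_{n,m,N}$ is not defined. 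The remaining pairs with $n < M$ or $m < M$ are not finite in number. However, if $n < M$ and $m \ge M'$ is large, the same lemma's error term $C/n$ is not small, so I would instead argue directly. Each $Q_{n,m} > 0$ exactly when $\si_{n,m,N} > 0$, which is the standing requirement for $L_{n,m,N}$ to make sense. For the pairs where one index is small, I would refine the $\ep_1$, $\ep_2$ estimates in the proof of Lemma \ref{L:weakSum}, treating the small index exactly: the cells $A_{j,k}$ shrink only in the large index, so $Q_{n,m}$ converges to a Riemann-type quantity of the form $n \sum_j \int (\int_{(s_{j-1},s_j]} g\,ds\,\text{weights})^2 d\tau$. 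That quantity is positive unless $\si$ degenerates, and this gives a positive lower bound along each such family.

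I expect this uniformity in the degenerate corner to be the main obstacle. On the principal region $n, m \ge M$ the argument above is complete and yields $L_{n,m,N} \le C/\sqrt{nm}$ with $C$ depending only on $f$ and $g$, and the same $N$-free structure of the bound carries over to the remaining region once $Q_{n,m}$ is bounded below there. If the refinement in the degenerate corner turns out to be awkward, I would first try interpreting the constant as allowed to absorb $\min_{(n,m)} Q_{n,m}^{-1/2}$ over pairs with $\si > 0$.
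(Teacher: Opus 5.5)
Your argument is correct, and it is organized differently from the paper's. The paper keeps the cubic sum in \eqref{Wskew} intact. After Lemma \ref{L:PoisMom}, it evaluates $\sum_{j,k}(nm)^2 h(Np_{j,k})p_{j,k}^{-3/2}|\gamma(A_{j,k})|^3$ asymptotically by Lemma \ref{L:weakSum} with $r=3$, $s=3/2$, and it bounds $\sigma_{n,m,N}^{-3}$ through Proposition \ref{P:variance}.

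You do two things differently:
\begin{itemize}
  \item You spend one factor $|\gamma(A_{j,k})|\le\|g\|_\infty\pi^2/nm$ to reduce the cubic sum to the quadratic form $Q_{n,m}$.
  \item You bound $\sigma_{n,m,N}^2\ge(c_0/4)Q_{n,m}$ using only $Np_{j,k}+1\le 2N$.
\end{itemize}
The Lyapunov ratio then collapses to $C(nm)^{-1/2}Q_{n,m}^{-1/2}$. Only the $N$-free case $r=2$, $s=0$, $h\equiv 1$ of Lemma \ref{L:weakSum} is needed, and its proof does go through for $s=0$.

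The paper's route gives a constant of the natural Lyapunov form, proportional to $\pi^4\int e^{3Xf/2}|g|^3\,d\nu/\|\pi e^{Xf/2}g\|_2^3$. Yours gives a cruder constant, involving $\|g\|_\infty$ and $e^{\|Xf\|_\infty}$, but it is uniform in $N$. It also isolates the only real issue: a lower bound on a deterministic quantity that depends on $g$ and the partition alone.

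Note that the paper's appeal to Proposition \ref{P:variance} bounds $\sigma_{n,m,N}$ below only when $1/n+1/m+1/N$ is small. So the paper's proof really covers only large $n$, $m$, $N$. Your principal-region argument ($n,m\ge M$, every $N$) already proves more than that. It is also all that the proof of Theorem \ref{T:CLT} uses, since \eqref{BE} holds trivially, for $C$ large, once $n$, $m$ or $N$ is below a fixed threshold.

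Your worry about the degenerate corner therefore concerns the wording of the statement, not your proof. Read literally for all $n,m$, the statement is not even meaningful, because $\sigma_{n,m,N}$ can vanish (e.g.\ $n=m=1$ and $\int g\,d\nu=0$). Under the reading ``whenever $\sigma_{n,m,N}>0$,'' your fallback is valid, and the missing justification is short.

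Fix $n$ and let $m\to\infty$. Then $Q_{n,m}\to 2\pi n\sum_j\int_0^{2\pi}G_j(\theta)^2\,d\theta$, where $G_j(\theta)=\int_{s_{j-1}}^{s_j}g(s,e^{i\theta})(1-s^2)^{-1/2}\,ds$. If this limit is $0$, each continuous $G_j$ vanishes identically. Then $\gamma(A_{j,k})=\int_{\theta_{k-1}}^{\theta_k}G_j\,d\theta=0$ for every $k$ and every $m$, so $\sigma$ vanishes along that entire family.

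Fixed $m$ is symmetric after substituting $s=\sin\phi$. Under this substitution $\nu$ becomes Lebesgue measure on $(0,\pi/2)\times(0,2\pi]$ and the partition in $\phi$ is uniform.

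The pairs with $n<M$ or $m<M$ form finitely many such one-parameter families. Each family either has $Q_{n,m}\equiv 0$ or has $Q_{n,m}$ tending to a positive limit. Hence $\inf\{Q_{n,m}:Q_{n,m}>0\}>0$, with the infimum depending only on $g$, which is what your ``absorb the minimum'' step needs.
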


\begin{proof}
  By Proposition \ref{P:variance} and Lemma \ref{L:PoisMom},
  \begin{align*}
    L_{n, m, N} &\le C \sum_{j, k} \frac {
      (nm)^{3/2} N^3 p_{j, k}^{3/2}
    } {
      (Np_{j, k} + 1)^3
    } \, |\ga(A_{j, k})|^3\\
    &= C(nm)^{-1/2} \sum_{j, k} {
      (nm)^2 h(Np_{j, k}) p_{j, k}^{-3/2} \, |\ga(A_{j, k})|^3
    },
  \end{align*}
  where $h(\la) = \la^3/(\la + 1)^3$. Since $h(\la) = 1 + O(\la^{-1})$, the
  result follows from Lemma \ref{L:weakSum} with $r = 3$ and $s = 3/2$.
\end{proof}

\section{Proving the functional central limit theorem}\label{S:CLTpf}

For our proof of the functional central limit, Theorem \ref{T:CLT}, we return to
the Taylor expansion, \eqref{Taylor}. For the law of large numbers, we took $a =
1$ and $b = 0$. For the central limit theorem, we take larger values of $a$ and
$b$ to make the remainders smaller. The terms in the second sum on the
right-hand side of \eqref{Taylor} will become correction terms in the
normalization used in the central limit theorem. The first term in the first sum
will become $W_{n, m, N}f$. But the other terms in the first sum are
problematic. The $r$-th such term has an $L^2$ norm whose magnitude is
$N^{-r/2}$, as demonstrated by a simpler version of the proof of Lemma
\ref{L:remEst}. No matter how large $a$ is, we will still have the term
corresponding to $r = 2$, with magnitude $N^{-1}$. Combined with the
$\sqrt{nmN}$ multiplier in Theorem \ref{T:CLT}, it seems we can never do better
than requiring $nm/N \to 0$. This, of course, is useless, as explained in the
introduction.

To deal with this, we will center the terms in the first sum so that they have
mean zero. By itself, this would not be enough, since even with the centering,
the terms still have $L^2$ norm of size $N^{-r/2}$. But by measuring these terms
in the weak sense rather than the $L^2$ sense, we find that they have size
$(nm)^{-1/2}N^{-r/2}$. (See Lemma \ref{L:weakRem} below.) Now when we combine
this with the $\sqrt{nmN}$ multiplier in Theorem \ref{T:CLT}, even for $r = 2$,
we get convergence to zero whenever $n, m, N \to \infty$, without any additional
condition. The additional condition is needed only for the remainders that arise
from the integral expressions in \eqref{Taylor}.

We begin with the following lemma, which derives the new Taylor expansion for
$Y_{n, m, N}f$ that arises after the centering described above. This centering
produces a second set of correction terms to be used in the normalization of
Theorem \ref{T:CLT}.

\begin{lemma}\label{L:Taylor+}
  For any nonnegative integers $a$ and $b$, there exists a constant $C$ that
  depends only on $a$, $b$, and $f$ such that
  \begin{multline}\label{Taylor+}
    Y_{n, m, N}f = X_{n, m}f + \sum_{r = 1}^b {
      \frac {(-1)^r} {rN^r} \, e^{rX_{n, m}f}
    } + \sum_{r = 2}^a \frac {
      (-1)^r \mu_r(N e^{-X_{n, m}f})
    } {
      r (N e^{-X_{n, m}f} + 1)^r
    }\\
    + \sum_{r = 1}^a \frac {(-1)^r} r \sum_{j, k} \frac {
      (S^{y_{j, k}}f - Np_{j, k})^r - \mu_r(Np_{j, k})
    } {
      (Np_{j, k} + 1)^r
    } \, 1_{A_{j, k}} + R,
  \end{multline}
  where $\|R\|_{L^2(S \times \Om)} \le C(N^{-(b + 1)} + N^{-(a + 1)/2})$.
\end{lemma}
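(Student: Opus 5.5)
We start from the pointwise expansion \eqref{Taylor}, applied on each cell $A_{j,k}$ with $S = S^{y_{j,k}}f$ and $p = p_{j,k}$, multiply by $1_{A_{j,k}}$, and sum over $j,k$. Three observations identify the main terms.

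First, $-\log p_{j,k} = Xf(y_{j,k})$, so these terms sum to $X_{n,m}f$.

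Second, since $1/p_{j,k} = e^{Xf(y_{j,k})}$, the deterministic sum $\sum_{r=1}^b (-1)^r/(rN^rp^r)$ becomes exactly $\sum_{r=1}^b \frac{(-1)^r}{rN^r} e^{rX_{n,m}f}$.

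Third, the $r$-th term of the stochastic sum is $(-1)^r(S-Np)^r/(r(Np+1)^r)$. We add and subtract $\mu_r(Np)$ in its numerator. The centered parts give precisely the fourth group of terms in \eqref{Taylor+}. The subtracted means give
\[
\sum_{j,k}\frac{(-1)^r\mu_r(Np_{j,k})}{r(Np_{j,k}+1)^r}1_{A_{j,k}} = \frac{(-1)^r\mu_r(Ne^{-X_{n,m}f})}{r(Ne^{-X_{n,m}f}+1)^r},
\]
because $X_{n,m}f$ is constant equal to $Xf(y_{j,k})$ on $A_{j,k}$. For $r=1$ this term vanishes since $\mu_1=0$, which is why the deterministic $\mu$-sum starts at $r=2$ while the centered sum starts at $r=1$.

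This leaves
\[
R = (-1)^{b+1}R_1 + (-1)^{a+1}R_2 ,
\]
where $R_1=\sum_{j,k}\int_{Np-1}^{Np}\frac{(Np-t)^b}{(t+1)^{b+1}}\,dt\,1_{A_{j,k}}$ and $R_2$ is the stochastic integral remainder. Lemma \ref{L:remEst} applies to $R_2$ verbatim, giving $\|R_2\|_{L^2(S\times\Om)}\le CN^{-(a+1)/2}$. (When $a=0$ we instead bound $R_2$ directly by $|\log((S+1)/(Np+1))|$, using the same splitting on $\{S>Np/2\}$ together with Lemma \ref{L:PoisLD}, which gives $CN^{-1/2}$.)

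For $R_1$, which is deterministic, take $t\in[Np-1,Np]$. Then $0\le Np-t\le 1$ and $t+1\ge Np$, so the integrand is at most $(Np)^{-(b+1)}\le e^{(b+1)\|Xf\|_\infty}N^{-(b+1)}$. Integrating over the unit interval and using $\nu(Z)=\pi^2$ gives $\|R_1\|_2\le CN^{-(b+1)}$, exactly as $R_1$ was bounded in the proof of Theorem \ref{T:LLN}. The triangle inequality then yields the stated bound on $R$.

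The argument is mostly bookkeeping. The only step needing care is confirming that \eqref{Taylor} holds for general $a,b$ with the stated signs, including the substitution $t=Nu-1$ in both remainders. That verification was already carried out in deriving \eqref{Taylor}, and the degenerate cases $a=0$ or $b=0$ are covered by the empty-sum conventions.
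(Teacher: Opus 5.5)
Your proposal is correct and follows essentially the same route as the paper: substitute the two-part expansion \eqref{Taylor}, add and subtract $\mu_r(Np_{j,k})$ and use $\mu_1 \equiv 0$, then bound the deterministic remainder $R_1$ by the pointwise estimate $(Np_{j,k})^{-(b+1)}$ and the stochastic remainder $R_2$ by Lemma \ref{L:remEst}. Your separate treatment of the case $a = 0$ is a harmless extra; the paper simply invokes Lemma \ref{L:remEst}, whose proof in fact goes through verbatim when $a = 0$.
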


\begin{proof}
  By \eqref{Taylor}, we have
  \begin{multline*}
    Y_{n, m, N}f = X_{n, m}f + \sum_{r = 1}^a \frac {(-1)^r} r \sum_{j, k} {
      \frac {
        (S^{y_{j, k}}f - Np_{j, k})^r - \mu_r(Np_{j, k})
      } {
        (Np_{j, k} + 1)^r
      } \, 1_{A_{j, k}}
    }\\
    + \sum_{r = 1}^a \frac {
      (-1)^r \mu_r(N e^{-X_{n, m}f})
    } {
      r (N e^{-X_{n, m}f} + 1)^r
    } + \sum_{r = 1}^b \frac {(-1)^r} {rN^r} \sum_{j, k} {
      e^{rXf(y_{j, k})} \, 1_{A_{j, k}}
    }\\
    + (-1)^{b + 1} R_1 + (-1)^{a + 1} R_2,
  \end{multline*}
  where
  \begin{align*}
    R_1 &= \sum_{j, k} \int_{Np_{j, k} - 1}^{Np_{j, k}} {
      \frac {(Np_{j, k} - t)^b} {(t + 1)^{b + 1}}
    } \, dt \, 1_{A_{j, k}} \text{ and}\\
    R_2 &= \sum_{j, k} \int_{Np_{j, k}}^{S^{y_{j, k}}f} {
      \frac {(S^{y_{j, k}}f - t)^a} {(t + 1)^{a + 1}}
    } \, dt \, 1_{A_{j, k}}.
  \end{align*}
  Since $\mu_1 \equiv 0$, this verifies \eqref{Taylor+} with $R = (-1)^{b + 1}
  R_1 + (-1)^{a + 1} R_2$.

  To estimate $R_1$, note that
  \begin{equation*}
    \|R_1\|_{L^2(S \times \Om)}^2 = \|R_1\|_2^2 \le \sum_{j, k} {
      \frac 1 {N^{2b + 2} p_{j, k}^{2b + 2}} \, \nu(A_{j, k})
    } = \frac 1 {N^{2b + 2}} \int e^{(2b + 2)X_{n, m}f} \, d\nu,
  \end{equation*}
  so that $\|R_1\|_{L^2(S \times \Om)} \le C/N^{b + 1}$. To estimate $R_2$, we
  may apply Lemma \ref{L:remEst} in order to obtain $\|R_2\|_{L^2(S \times \Om)}
  \le C/N^{(a + 1)/2}$.
\end{proof}

\begin{lemma}\label{L:weakRem}
  Let $r \in \bN$ and $g \in C_c^\infty(Z)$. Define
  \begin{equation*}
    R = \sum_{j, k} \frac {
      (S^{y_{j, k}}f - Np_{j, k})^r - \mu_r(Np_{j, k})
    } {
      (Np_{j, k} + 1)^r
    } \, 1_{A_{j, k}}.
  \end{equation*}
  Then there exists a constant $C$ that depends only on $r$, $f$, and $g$ such
  that $E|\ang{R, g}|^2 \le C/nmN^r$.
\end{lemma}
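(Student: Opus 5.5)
The plan is to expand $\ang{R, g}$ as a sum of independent, mean-zero random variables and compute its second moment directly. Since $R$ is constant on each $A_{j,k}$, we have
\begin{equation*}
  \ang{R, g} = \sum_{j, k} \frac{(S^{y_{j, k}}f - Np_{j, k})^r - \mu_r(Np_{j, k})}{(Np_{j, k} + 1)^r} \, \ga(A_{j, k}),
\end{equation*}
where $\ga$ is the signed measure $d\ga = g \, d\nu$ as in \eqref{Wvar}. The summands are independent across $(j,k)$, because the $S^{y_{j,k}}f$ are independent. Each summand has mean zero, since $E[(S^{y_{j,k}}f - Np_{j,k})^r] = \mu_r(Np_{j,k})$ when $S^{y_{j,k}}f \sim \Pois(Np_{j,k})$. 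Hence the cross terms vanish, and
\begin{equation*}
  E|\ang{R, g}|^2 = \sum_{j, k} \frac{\operatorname{Var}\big((S^{y_{j, k}}f - Np_{j, k})^r\big)}{(Np_{j, k} + 1)^{2r}} \, \ga(A_{j, k})^2 .
\end{equation*}

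Next we bound each variance by the second moment. By Lemma \ref{L:PoisMom} with exponent $2r$,
\begin{equation*}
  \operatorname{Var}\big((S - Np)^r\big) \le E|S - Np|^{2r} \le C N^r p^r .
\end{equation*}
For the denominator we use $(Np+1)^{2r} \ge N^{2r}p^{2r}$ and $p_{j,k} \ge e^{-\|Xf\|_\infty}$. Together these give a per-cell factor of at most
\begin{equation*}
  \frac{C N^r p^r}{N^{2r}p^{2r}} = \frac{C}{N^r p^r} \le \frac{C}{N^r},
\end{equation*}
with $C$ depending only on $r$ and $f$.

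Finally, we control the spatial sum. Since $|\ga(A_{j,k})| \le \|g\|_\infty \nu(A_{j,k}) = \|g\|_\infty \pi^2/nm$, we get
\begin{equation*}
  \sum_{j,k} \ga(A_{j,k})^2 \le nm \cdot \|g\|_\infty^2 \pi^4/(nm)^2 = C/nm .
\end{equation*}
Multiplying this by the per-cell factor $C/N^r$ yields $E|\ang{R,g}|^2 \le C/(nmN^r)$, with $C$ depending only on $r$, $f$, and $g$. Alternatively, this last step follows from Lemma \ref{L:weakSum} with exponent $2$, $s = r$, and $h(\la) = \la^{2r}/(\la+1)^{2r}$, which bounds $nm\sum_{j,k} h(Np_{j,k})\,p_{j,k}^{-r}\ga(A_{j,k})^2$ uniformly; the crude $\|g\|_\infty$ bound is simpler and suffices.

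There is no serious obstacle here. The main point is that independence removes the cross terms, which is exactly what gains the factor $1/nm$ compared with the $L^2(S\times\Om)$ bound of order $N^{-r}$. The only care needed is that Lemma \ref{L:PoisMom} gives a bound uniform in $j,k$, which holds because $p_{j,k}$ is bounded below by $e^{-\|Xf\|_\infty}$.
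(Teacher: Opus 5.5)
Your proposal is correct and follows the paper's proof essentially verbatim: independence removes the cross terms, Lemma \ref{L:PoisMom} with exponent $2r$ together with $p_{j,k} \ge e^{-\|Xf\|_\infty}$ gives a per-cell factor $C/N^r$, and $|\ga(A_{j,k})| \le \|g\|_\infty \pi^2/nm$ supplies the $1/nm$. Your step $\operatorname{Var}(V) \le E V^2$ with $V = (S - Np)^r$ is a slight streamlining: the paper instead controls the centering term using the fact that $\mu_r$ is a polynomial of degree $\flr{r/2}$.
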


\begin{proof}
  Recall that $d\ga = g \, d\nu$, so that
  \begin{equation*}
    \ang{R, g} = \sum_{j, k} \frac {
      (S^{y_{j, k}}f - Np_{j, k})^r - \mu_r(Np_{j, k})
    } {
      (Np_{j, k} + 1)^r
    } \, \ga(A_{j, k}).
  \end{equation*}
  Since this is a sum of independent, mean zero random variables, we have
  \begin{equation*}
    E|\ang{R, g}|^2 = \sum_{j, k} \frac {
      E|(S^{y_{j, k}}f - Np_{j, k})^r - \mu_r(Np_{j, k})|^2
    } {
      (Np_{j, k} + 1)^{2r}
    } \, \ga(A_{j, k})^2.
  \end{equation*}
  Using $\nu(A_{j, k}) = \pi^2/nm$, $p_{j, k} \ge e^{-\|Xf\|_\infty}$, the fact
  that $\mu_r$ is a polynomial of degree $\flr{r/2}$, and applying Lemma
  \ref{L:PoisMom}, we obtain
  \begin{equation*}
    E|\ang{R, g}|^2 \le C \sum_{j, k} \frac 1 {N^r} \, \frac 1 {n^2 m^2}
      = \frac C {nmN^r},
  \end{equation*}
  which is what we wanted to prove.
\end{proof}

\begin{lemma}\label{L:normEst}
  Let $a, b > 0$ and let $\Phi$ be the standard normal distribution function.
  Then
  \begin{align}
    |\Phi(ax) - \Phi(bx)|
      &\le \frac 1 {\sqrt{2\pi e}} \, \frac {|b - a|} {a \wedge b},
      \label{normEst1} \\
    |\Phi(x/a) - \Phi(x/b)|
      &\le \frac 1 {\sqrt{2\pi e}} \, \frac {|b - a|} {a \wedge b},
      \label{normEst2}
  \end{align}
  for all $x \in \bR$.
\end{lemma}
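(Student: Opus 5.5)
Both inequalities come from the mean value theorem combined with the elementary bound $\sup_{u} |u|\,\Phi'(u) = \sup_u |u|e^{-u^2/2}/\sqrt{2\pi} = 1/\sqrt{2\pi e}$. The supremum is attained at $|u| = 1$, which is where the constant comes from.

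For \eqref{normEst1}, fix $x$. If $x = 0$ or $a = b$ there is nothing to prove, so assume otherwise and, by symmetry, $a < b$. The mean value theorem applied to $t \mapsto \Phi(tx)$ on $[a, b]$ gives some $c \in (a, b)$ with
\begin{equation*}
  \Phi(bx) - \Phi(ax) = (b - a)\, x\, \Phi'(cx) = \frac{b - a}{c}\,(cx)\Phi'(cx).
\end{equation*}
Since $|(cx)\Phi'(cx)| \le 1/\sqrt{2\pi e}$ and $c > a = a \wedge b$, the right-hand side is at most $\frac{1}{\sqrt{2\pi e}}\,\frac{|b-a|}{a\wedge b}$ in absolute value.

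For \eqref{normEst2}, apply \eqref{normEst1} with $a' = 1/a$ and $b' = 1/b$. This gives the bound $\frac{1}{\sqrt{2\pi e}}\,|1/b - 1/a|/(1/a \wedge 1/b)$. Now $|1/b - 1/a| = |b - a|/(ab)$ and $1/a \wedge 1/b = 1/(a \vee b)$, so the ratio equals $|b - a|(a\vee b)/(ab) = |b-a|/(a \wedge b)$, which is exactly the claimed bound.

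The only point needing care is computing $\sup_u |u| e^{-u^2/2}$, which follows from one-variable calculus: the derivative of $u e^{-u^2/2}$ vanishes at $u = 1$, where the value is $e^{-1/2}$. The rest is routine.
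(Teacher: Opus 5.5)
Your proof is correct and is essentially the paper's argument. The paper bounds $\frac{1}{\sqrt{2\pi}}\int_{a|x|}^{b|x|} e^{-u^2/2}\,du$ by $(b-a)|x|e^{-a^2x^2/2}/\sqrt{2\pi}$ and maximizes over $x$, while you apply the mean value theorem and the bound $\sup_u |u|\Phi'(u) = 1/\sqrt{2\pi e}$; both rest on the same calculus fact, and your reciprocal substitution for \eqref{normEst2} is exactly the paper's.
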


\begin{proof}
  Without loss of generality, we may assume that $a < b$. Then
  \begin{equation*}
    |\Phi(ax) - \Phi(bx)|
      = \frac 1 {\sqrt{2\pi}} \int_{a|x|}^{b|x|} e^{-u^2/2} \, du
      \le \frac 1 {\sqrt{2\pi}} \, (b - a) |x| e^{-a^2 x^2/2}.
  \end{equation*}
  Since the maximum of the right-hand side occurs when $|x| = 1/a$, this gives
  \eqref{normEst1}. Since $0 < 1/b < 1/a$, applying \eqref{normEst1} gives
  \begin{equation*}
    |\Phi(x/a) - \Phi(x/b)|
      \le \frac 1 {\sqrt{2\pi e}} \, \frac {a^{-1} - b^{-1}} {b^{-1}}
      = \frac 1 {\sqrt{2\pi e}} \, \frac {b - a} a,
  \end{equation*}
  which proves \eqref{normEst2}.
\end{proof}

\begin{proof}[Proof of Theorem \ref{T:CLT}]
  By L\'evy's continuity theorem, it suffices to show that $\ang{Z_{n, m, N}f, g}
  \To \ang{\pi e^{Xf/2} W, g}$ for all $g \in C_c^\infty(Z)$. The white noise
  $W$ is characterized by $\ang{W, g} =_d \|g\|_2 \eta$ for all $g \in
  C_c^\infty(Z)$, where $\eta \sim N(0, 1)$. Hence, $\ang{\pi e^{Xf/2} W, g} =_d
  \|\pi e^{Xf/2} g\|_2 \eta$. We must therefore show that $\ang{Z_{n, m, N}f, g}
  \To \|\pi e^{Xf/2} g\|_2 \eta$ for all $g \in C_c^\infty(Z)$.

  Fix $g \in C_c^\infty$. In the following proof, $C$ will denote a constant
  that depends only on $a$, $b$, $f$, and $g$. Now, if $F$ is an $L^2(Z)$-valued
  random variable, then $E|\ang{F, g}|^2 \le \|g\|_2 E\|F\|_2^2$. Hence, by
  \eqref{discWN} and Lemmas \ref{L:Taylor+} and \ref{L:weakRem}, we may write
  \begin{equation}\label{WN+err}
    Z_{n, m, N}f = W_{n, m, N}f + \be_{n, m, N}f,
  \end{equation}
  where
  \begin{align*}
    E|\ang{\be_{n, m, N}f, g}|^2 &\le C nmN \bigg(
      \sum_{r = 2}^a \frac 1 {nmN^r} + \frac 1 {N^{2b + 2}} + \frac 1 {N^{a + 1}}
    \bigg)\\
    &= C \left(
      \frac 1 N + \frac {nm} {N^{2b + 1}} + \frac {nm} {N^a}
    \right)\\
    &\le C \left(
      \frac 1 N + \frac {nm} {N^\ka}
    \right).
  \end{align*}
  Define $\ep = (N^{-1} + nmN^{-\ka})^{1/3}$, so that
  \begin{equation*}
    P(|\ang{\be_{n, m, N}f, g}| > \ep)
        \le \ep^{-2} E|\ang{\be_{n, m, N}f, g}|^2
        \le C \ep.
  \end{equation*}
  Using \eqref{BEraw}, Lemma \ref{L:normEst}, and Propositions \ref{P:variance}
  and \ref{P:skew}, we have
  \begin{align*}
    |P(
      \langle W_{n, m, N} f, g \rangle \le x
    ) &- P(
      \|\pi e^{Xf/2} g\|_2 \eta \le x
    )|\\
    &\le |P(
      \si_{n, m, N}^{-1} \ang{W_{n, m, N}f, g} \le \si_{n, m, N}^{-1} x
    ) - \Phi(\si_{n, m, N}^{-1} x))|\\
    &\hspace{2.25in} + |
      \Phi(\si_{n, m, N}^{-1} x) - \Phi(\|\pi e^{Xf/2} g\|_2^{-1} x)
    |\\
    &\le C \left(
      \frac 1 {\sqrt{nm}} + \frac 1 n + \frac 1 m + \frac 1 N
    \right),
  \end{align*}
  for all $x \in \bR$. We may therefore write,
  \begin{align*}
    P(\ang{Z_{n, m, N}f, g} \le x)
      &= P(\ang{W_{n, m, N}f, g} + \ang{\be_{n, m, N}f, g} \le x)\\
    &\le P(\ang{W_{n, m, N}f, g} \le x + \ep)
      + P(|\ang{\be_{n, m, N}f, g}| > \ep)\\
    &\le P(\|\pi e^{Xf/2} g\|_2 \eta \le x + \ep) + C \left(
      \frac 1 {\sqrt{nm}} + \frac 1 n + \frac 1 m + \frac 1 N + \ep
    \right)\\
    &\le P(\|\pi e^{Xf/2} g\|_2 \eta \le x) + C \left(
      \frac 1 {\sqrt{nm}} + \frac 1 n + \frac 1 m + \frac 1 N + \ep
    \right),
  \end{align*}
  where in the last inequality we have once again used Lemma \ref{L:normEst}.
  Similarly,
  \begin{align*}
    P(\ang{Z_{n, m, N}f, g} \le x)
      &\ge P(\ang{W_{n, m, N}f, g} \le x - \ep)
      - P(|\ang{\be_{n, m, N}f, g}| > \ep)\\
    &\ge P(\|\pi e^{Xf/2} g\|_2 \eta \le x) - C \left(
      \frac 1 {\sqrt{nm}} + \frac 1 n + \frac 1 m + \frac 1 N + \ep
    \right).
  \end{align*}
  By the definition of $\ep$, this proves \eqref{BE}. It follows that
  $\ang{Z_{n, m, N}f, g} \To \|\pi e^{Xf/2} g\|_2 \eta$ whenever $n, m, N \to
  \infty$ and $nm/N^\ka \to 0$.
\end{proof}

\begin{rmk}
  The term $\be_{n, m, N}f$ in \eqref{WN+err} can be written explicitly by
  tracing through the definition of $Z_{n, m, N}f$ in the statement of Theorem
  \ref{T:CLT} and comparing this to both the definition of $W_{n, m, N}f$ in
  \eqref{discWN} and the decomposition \eqref{Taylor+}. Doing this, we find that
  \begin{multline*}
    \be_{n, m, N}f = \sqrt{nmN} \, \bigg(
      \sum_{r = 2}^a \frac {(-1)^r} r \sum_{j, k} \frac {
        (S^{y_{j, k}}f - Np_{j, k})^r - \mu_r(Np_{j, k})
      } {
        (Np_{j, k} + 1)^r
      } \, 1_{A_{j, k}}\\
      + (-1)^{b + 1} \sum_{j, k} {
        \int_{Np_{j, k} - 1}^{Np_{j, k}} {
          \frac {(Np_{j, k} - t)^b} {(t + 1)^{b + 1}}
        } \, dt \, 1_{A_{j, k}}
      } + (-1)^{a + 1} \sum_{j, k} {
        \int_{Np_{j, k}}^{S^{y_{j, k}}f} {
          \frac {(S^{y_{j, k}}f - t)^a} {(t + 1)^{a + 1}}
        } \, dt \, 1_{A_{j, k}}
      }
    \bigg).
  \end{multline*}
  Comparing this to \eqref{discWN}, the terms $W_{n, m, N}f$ and $\be_{n, m,
  N}f$ in \eqref{WN+err} evidently have a complicated structure of dependency.
  This is not relevant, however, to our proof of Theorem \ref{T:CLT}, since
  $\be_{n, m, N}f$ is of a smaller order of magnitude than $W_{n, m, N}f$. In
  short, Theorem \ref{T:CLT} holds because $W_{n, m, N}f$ converges in
  distribution to a white noise, whereas $\be_{n, m, N}f$ converges to zero in
  probability. Nonetheless, one should not be misled into thinking that $Z_{n,
  m, N}f$ looks like a white noise plus a small, independent perturbation.
  Rather, it looks like a white noise plus a small, dependent perturbation.
\end{rmk}

\begin{proof}[Proof of Corollary \ref{C:CLT3}]
  We apply Theorem \ref{T:CLT} with $a = 3$ and $b = 1$, so that $\ka = 3$. In
  this case,
  \begin{equation*}
    Z_{n, m, N}f = \sqrt{nmN} \, \bigg(
      Y_{n, m, N}f - X_{n, m}f + \frac 1 N \, e^{X_{n, m}f}
      - \frac 1 2 \, h_2(N e^{-X_{n, m}f})
      + \frac 1 3 \, h_3(N e^{-X_{n, m}f})
    \bigg),
  \end{equation*}
  where
  \begin{align}
    h_2(\la) &= {
      \frac \la {(\la + 1)^2} = \frac 1 \la + O(\la^{-2})
    } \text{ and} \label{h2asymp} \\
    h_3(\la) &= \frac \la {(\la + 1)^3} = O(\la^{-2}) \notag
  \end{align}
  as $\la \to \infty$. Hence, there is a constant $C$ that depends only on $f$
  such that
  \begin{align*}
    \left|
      h_2(N e^{-X_{n, m}f}) - \frac 1 N \, e^{X_{n, m}f}
    \right| &\le \frac C {N^2} \text{ and}\\
    |h_3(N e^{-X_{n, m}f})| &\le \frac C {N^2}.
  \end{align*}
  We therefore have
  \begin{equation*}
    Z_{n, m, N}f = \sqrt{nmN} \, \bigg(
      Y_{n, m, N}f - X_{n, m}f + \frac 1 {2N} \, e^{X_{n, m}f} + R
    \bigg),
  \end{equation*}
  where $\|\sqrt{nmN} \, R\|_2^2 \le CnmN/N^4 = Cnm/N^3$.
\end{proof}

\begin{proof}[Proof of Corollary \ref{C:CLT5}]
  We apply Theorem \ref{T:CLT} with $a = 5$ and $b = 2$, so that $\ka = 5$. In
  this case,
  \begin{multline*}
    Z_{n, m, N}f = \sqrt{nmN} \, \bigg(
      Y_{n, m, N}f - X_{n, m}f + \frac 1 N \, e^{X_{n, m}f}
      - \frac 1 {2N^2} \, e^{2X_{n, m}f}\\
    - \frac 1 2 \, h_2(N e^{-X_{n, m}f})
      + \frac 1 3 \, h_3(N e^{-X_{n, m}f})
      - \frac 1 4 \, h_4(N e^{-X_{n, m}f})
      + \frac 1 5 \, h_5(N e^{-X_{n, m}f})
    \bigg),
  \end{multline*}
  where
  \begin{align*}
    h_2(\la) &= \frac \la {(\la + 1)^2}
      = \frac 1 \la - \frac 2 {\la^2} + O(\la^{-3}),\\
    h_3(\la) &= \frac \la {(\la + 1)^3}
      = \frac 1 {\la^2} + O(\la^{-3}),\\
    h_4(\la) &= \frac {3\la^2 + \la} {(\la + 1)^4}
      = \frac 3 {\la^2} + O(\la^{-3}), \text{ and}\\
    h_5(\la) &= \frac {10\la^2 + \la} {(\la + 1)^5} = O(\la^{-3})
  \end{align*}
  as $\la \to \infty$. Hence, there is a constant $C$ that depends only on $f$
  such that
  \begin{align*}
    \left|
      h_2(N e^{-X_{n, m}f}) - \frac 1 N \, e^{X_{n, m}f}
        + \frac 2 {N^2} \, e^{2X_{n, m}f}
    \right| &\le \frac C {N^3},\\
    \left|
      h_3(N e^{-X_{n, m}f}) - \frac 1 {N^2} \, e^{2X_{n, m}f}
    \right| &\le \frac C {N^3},\\
    \left|
      h_4(N e^{-X_{n, m}f}) - \frac 3 {N^2} \, e^{2X_{n, m}f}
    \right| &\le \frac C {N^3}, \text{ and}\\
    |h_5(N e^{-X_{n, m}f})| &\le \frac C {N^3}.
  \end{align*}
  We therefore have
  \begin{align*}
    Z_{n, m, N}f &= \sqrt{nmN} \, \bigg({
      Y_{n, m, N}f - X_{n, m}f
      + \frac 1 N \, e^{X_{n, m}f}
      - \frac 1 {2N^2} \, e^{2X_{n, m}f}
    }\\
    &\qquad - \frac 1 2 \bigg(
      h_2(N e^{-X_{n, m}f})
      - \frac 1 N \, e^{X_{n, m}f} + \frac 2 {N^2} \, e^{2X_{n, m}f}
    \bigg)\\
    &\qquad + \frac 1 3 \bigg(
      h_3(N e^{-X_{n, m}f}) - \frac 1 {N^2} \, e^{2X_{n, m}f}
    \bigg) - \frac 1 4 \bigg(
      h_4(N e^{-X_{n, m}f}) - \frac 3 {N^2} \, e^{2X_{n, m}f}
    \bigg)\\
    &\qquad + \frac 1 5 \, h_5(N e^{-X_{n, m}f})\\
    &\qquad {
      - \frac 1 {2N} \, e^{X_{n, m}f}
      + \frac 1 {N^2} \, e^{2X_{n, m}f}
      + \frac 1 {3N^2} \, e^{2X_{n, m}f}
      - \frac 3 {4N^2} \, e^{2X_{n, m}f}
    }\bigg)\\
    &= \sqrt{nmN} \, \bigg(
      Y_{n, m, N}f - X_{n, m}f
      + \frac 1 {2N} \, e^{X_{n, m}f}
      + \frac 1 {12N^2} \, e^{2X_{n, m}f} + R
    \bigg)
  \end{align*}
  where $\|\sqrt{nmN} \, R\|_2^2 \le CnmN/N^6 = Cnm/N^5$.
\end{proof}

In the introduction, we noted that if $n$ and $m$ have the same order of
magnitude, then we must take $N \approx n^2$ to use the law of large numbers.
But in our central limit theorem, Theorem \ref{T:CLT}, we need $N \gg
n^{2/\ka}$. For this to be an improvement on the law of large numbers, we must
take $\ka \ge 2$. The smallest $a$ and $b$ that produce $\ka = 2$ are $a = 2$
and $b = 1$. In this case, in the notation of the proof of Corollary
\ref{C:CLT3}, we have
\begin{equation*}
  Z_{n, m, N}f = \sqrt{nmN} \, \bigg(
    Y_{n, m, N}f - X_{n, m}f + \frac 1 N \, e^{X_{n, m}f}
    - \frac 1 2 \, h_2(N e^{-X_{n, m}f})
  \bigg),
\end{equation*}
without the $h_3$ term. Since $h_2(\la) = O(\la^{-1})$, we must utilize
\eqref{h2asymp} in order to do better than $nm/N \to 0$. But when we do this, we
merely conclude that \eqref{CLT3} holds whenever $nm/N^2 \to 0$. In other words,
we gain no simplification to \eqref{CLT3} by weakening the condition from
$nm/N^3 \to 0$ to $nm/N^2 \to 0$. Hence, the instance of Theorem \ref{T:CLT}
that corresponds to $\ka = 2$ is redundant when compared to $\ka = 3$.
Similarly, we gain no simplification to \eqref{CLT5} by taking $\ka = 4$ instead
of $\ka = 5$. In fact, by examining the structures of the proofs of Corollaries
\ref{C:CLT3} and \ref{C:CLT5}, it is clear that the only non-redundant instances
of Theorem \ref{T:CLT} are those in which $\ka$ is odd.

\section{Alternative normalization methods}\label{S:altNorm}

Let us define
\begin{equation*}
  \wh Y_{n, m, N}f = - \sum_{j, k} {
    \log \left( \frac{S^{y_{j, k}}f \vee 1} N \right) 1_{A_{j, k}}
  },
\end{equation*}
For notational simplicity, let $S = S^{y_{j, k}}f$ and $p = p_{j, k}$. Note that
$S \vee 1 = S + 1_{\{S = 0\}}$, so that $E[S \vee 1] = Np + e^{-Np}$. As in the
proof of Theorem \ref{T:LLN}, if we let $\ze = \ze_{j, k} = 1_{\{S = 0\}} -
e^{-Np}$, then
\begin{equation*}
  \log \left( \frac {S \vee 1} N \right) - \log \left(
    p + \frac 1 N \, e^{-Np}
  \right) = \sum_{r = 1}^a {
      (-1)^{r - 1} \frac {(S - Np + \ze)^r} {r (Np + e^{-Np})^r}
    } + (-1)^a \int_{Np}^{S + \ze} {
      \frac {(S + \ze - t)^a} {(t + e^{-Np})^{a + 1}} \, dt
    }
\end{equation*}
and
\begin{equation*}
  \log \left(
    p + \frac 1 N \, e^{-Np}
  \right) - \log p = \int_{Np - e^{-Np}}^{Np} {
    \frac 1 {t + e^{-Np}} \, dt
  },
\end{equation*}
where in the second expansion, we have taken $b = 0$. Let
\begin{equation*}
  \wh \mu_r(\la) = E[(S - \la + 1_{\{S = 0\}} - e^{-\la})^r],
\end{equation*}
where $S \sim \Pois(\la)$. Then for any $a \ge 1$, we have
\begin{multline*}
  \wh Y_{n, m, N}f = X_{n, m}f + \frac 1 {\sqrt{nmN}} \, \wh W_{n, m, N}f
  + \sum_{r = 2}^a \frac {
    (-1)^r \wh \mu_r(N e^{-X_{n, m}f})
  } {
    r (N e^{-X_{n, m}f} + e^{-Np_{j, k}})^r
  }\\
  + \sum_{r = 2}^a \frac {(-1)^r} r \sum_{j, k} \frac {
    (S^{y_{j, k}}f - Np_{j, k} + \ze_{j, k})^r - \wh \mu_r(Np_{j, k})
  } {
    (Np_{j, k} + e^{-Np_{j, k}})^r
  } \, 1_{A_{j, k}} - \wh R_1 + (-1)^{a + 1} \wh R_2,
\end{multline*}
where
\begin{align*}
  \wh W_{n, m, N}f &= \sqrt{nmN} \, \sum_{j, k} \frac{
    Np_{j, k} - S^{y_{j, k}}f - \ze_{j, k}
  }{
    Np_{j, k} + e^{-Np_{j, k}}
  } \, 1_{A_{j, k}},\\
  \wh R_1 &= \sum_{j, k} \int_{Np_{j, k} - e^{-Np_{j, k}}}^{Np_{j, k}} {
    \frac 1 {t + e^{-Np_{j, k}}}
  } \, dt \, 1_{A_{j, k}}, \text{ and}\\
  \wh R_2 &= \sum_{j, k} \int_{Np_{j, k}}^{S^{y_{j, k}}f + \ze_{j, k}} {
    \frac {(S^{y_{j, k}}f + \ze_{j, k} - t)^a} {(t + e^{_-Np_{j, k}})^{a + 1}}
  } \, dt \, 1_{A_{j, k}}.
\end{align*}
Note that for any $r \ge 1$, we have $E|\ze|^r \le 2e^{-Np}$. Thus, in all of
our earlier results, except for Corollaries \ref{C:CLT3} and \ref{C:CLT5}, if we
replace $S - Np$ with $S + \ze - Np$, then the proofs go through with minor
modifications. This includes our Berry-Essen inequality, \eqref{BE}. In fact, we
obtain one improvement, which is that $\wh R_1$ is exponentially small, so that
for any $r > 0$, there is a constant $C$ such that $\|\wh R_1\|_2 \le C/N^r$.

Hence, the law of large numbers, Theorem \ref{T:LLN}, holds with $Y_{n, m, N}f$
replaced by $\wh Y_{n, m, N}f$, and we have the following central limit theorem.

\begin{thm}\label{T:maxCLT}
  Let $a \in \bN$ and let $W$ be a white noise on $Z$. Define
  \begin{equation}\label{maxCLT}
    \wh Z_{n, m, N}f = \sqrt{nmN} \, \bigg(
      \wh Y_{n, m, N}f - X_{n, m}f - \sum_{r = 2}^a \frac {
        (-1)^r \mu_r(N e^{-X_{n, m}f})
      } {
        r (N e^{-X_{n, m}f} + e^{-Np_{j, k}})^r
      }
    \bigg).
  \end{equation}
  Then $\wh Z_{n, m, N}f \To \pi e^{Xf/2} \, W$ in $\cD'(Z)$ whenever $n, m, N
  \to \infty$ in such a way that $nm/N^a \to 0$.
\end{thm}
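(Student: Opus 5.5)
The plan is to repeat the proof of Theorem \ref{T:CLT} almost verbatim, using the hatted expansion of $\wh Y_{n, m, N}f$ displayed just before the statement. We read the denominator $e^{-Np_{j, k}}$ in \eqref{maxCLT} as $e^{-Ne^{-X_{n, m}f}}$, which agrees with it on each $A_{j, k}$.

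First we rewrite $\wh Z_{n, m, N}f = \wh W_{n, m, N}f + \wh\be_{n, m, N}f$. Here $\wh\be_{n, m, N}f$ is $\sqrt{nmN}$ times the sum of four pieces:
\begin{enumerate}[(i)]
  \item the centered terms $\sum_{r = 2}^a \frac{(-1)^r}{r} \sum_{j, k} (\cdots) 1_{A_{j, k}}$;
  \item $-\wh R_1$;
  \item $(-1)^{a + 1} \wh R_2$;
  \item the correction discrepancy $\sum_{r = 2}^a (-1)^r (\wh\mu_r - \mu_r)(Ne^{-X_{n, m}f}) / \bigl(r (Ne^{-X_{n, m}f} + e^{-Ne^{-X_{n, m}f}})^r\bigr)$.
\end{enumerate}
Piece (iv) appears because the statement centers with $\mu_r$ rather than $\wh\mu_r$.

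We then bound $E|\ang{\wh\be_{n, m, N}f, g}|^2$ for fixed $g \in C_c^\infty(Z)$, piece by piece.
\begin{itemize}
  \item Piece (i): the proof of Lemma \ref{L:weakRem} goes through with $S - Np$ replaced by $S - Np + \ze$ and $\mu_r$ by $\wh\mu_r$. It uses $E|\ze|^q \le 2e^{-Np}$, Minkowski, and Lemma \ref{L:PoisMom}, together with $\wh\mu_r(\la) = O(\la^{\flr{r/2}})$, which follows from the same estimates. The denominator satisfies $Np + e^{-Np} \ge Np \ge N e^{-\|Xf\|_\infty}$. This gives a contribution $C\, nmN \sum_{r = 2}^a (nmN^r)^{-1} \le C/N$.
  \item Piece (ii): $\wh R_1$ is deterministic. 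Its integrand is bounded by $(Np - e^{-Np})^{-1}$ over an interval of length $e^{-Np}$, so $\|\wh R_1\|_\infty \le Ce^{-cN}$. Its contribution $nmN \cdot C e^{-2cN}$ is bounded by $C\, nm/N^a$.
  \item Piece (iii): we redo Lemma \ref{L:remEst} with $S + \ze$ in place of $S$ and $t + e^{-Np}$ in place of $t + 1$. On $\{S + \ze > Np/2\}$ the integrand is bounded by $(2/Np)^{a + 1}|S + \ze - t|^a$. On the complement we use the same fourth-moment bound plus Lemma \ref{L:PoisLD}, noting $|\ze| \le 1$, so $\{S + \ze \le Np/2\} \subset \{S \le Np/2 + 1\}$. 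To apply Lemma \ref{L:PoisLD} there we will either use a threshold slightly above $1/2$ or redo its Chernoff argument with $a = 3/4$ for large $N$.
  \item Piece (iv): $\wh\mu_r - \mu_r$ is exponentially small, because expanding $(S - \la + \ze)^r$ binomially and applying H\"older gives $|\wh\mu_r(\la) - \mu_r(\la)| \le C\la^{r}e^{-\la/2}$. Its $L^2$ norm is therefore $O(e^{-cN})$, and this piece is negligible too.
\end{itemize}
Altogether,
\begin{equation*}
  E|\ang{\wh\be_{n, m, N}f, g}|^2 \le C\left(\frac1N + \frac{nm}{N^a}\right).
\end{equation*}
Unlike Theorem \ref{T:CLT}, no $N^{-(2b+1)}$ term appears, since $\wh R_1$ is exponentially small; this is why $\ka = a$.

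Next, $\ang{\wh W_{n, m, N}f, g}$ is again a sum of independent mean-zero variables. Its variance is
\begin{equation*}
  \wh\si^2 = \sum_{j, k} \frac{nmN\,\mathrm{Var}(S + \ze)}{(Np + e^{-Np})^2}\,\ga(A_{j, k})^2,
\end{equation*}
where $\mathrm{Var}(S + \ze) = Np + O(Npe^{-Np})$. So Lemma \ref{L:weakSum} applies with $h(\la) = (\la + O(\la e^{-\la}))\la/(\la + e^{-\la})^2 = 1 + O(\la^{-1})$, giving the analogue of Proposition \ref{P:variance}. Similarly, Lemma \ref{L:PoisMom} with $\ze$ added gives the analogue of Proposition \ref{P:skew}. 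Then \eqref{BEraw}, Lemma \ref{L:normEst}, and the same $\ep$-sandwich argument with $\ep = (N^{-1} + nmN^{-a})^{1/3}$ yield a Berry--Esseen bound like \eqref{BE} with $\ka = a$. L\'evy's continuity theorem then finishes the proof.

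The main obstacle is piece (iii), the stochastic remainder $\wh R_2$. The delicate part is the lower tail: the denominator $t + e^{-Np}$ can be tiny when $S + \ze$ is near $0$, so the integral is not controlled there pointwise. The plan is to bound $|\wh R_2|$ crudely on the bad event by $C(Np)^{a + 1}\max_{t}(t + e^{-Np})^{-(a + 1)} \le C(Np)^{a+1}e^{(a + 1)Np}$, but that growth is too fast to be beaten by the Chernoff bound. So we will instead note that the integrand between $Np$ and $S + \ze \ge 1$ is bounded by $|S + \ze - t|^a$ with $t \ge 1$. The case $S + \ze < 1$ happens only when $S = 0$, in which case $S + \ze = 1 - e^{-Np}$, so $t \ge 1 - e^{-Np} \ge 1/2$. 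With this, the argument of Lemma \ref{L:remEst} applies unchanged.
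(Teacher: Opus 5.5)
Your proposal is correct and follows the paper's route. You rerun the proof of Theorem~\ref{T:CLT} on the hatted expansion, use that $\wh R_1$ is exponentially small (so no $b$-correction is needed and $\ka = a$), and absorb the exponentially small discrepancy $\wh\mu_r - \mu_r$ into the error; the paper does the same, only tersely.

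One small slip in your last paragraph: $S + \ze = 1 - e^{-Np} < 1$ also when $S = 1$, not only when $S = 0$. This is harmless, because $S + \ze + e^{-Np} = S \vee 1 \ge 1$ always, so $t + e^{-Np} \ge 1$ on the whole integration range.
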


\begin{proof}
  If, in \eqref{maxCLT}, we replace $\mu$ with $\wh \mu$, then all of our
  earlier proofs go through with only minor modifications. Note that the sum
  involving $b$ is not needed because $\wh R_1$ is already exponentially small.
  Since $E|1_{\{S = 0\}} - e^{-\la}|^r$ is exponentially small, $\wh \mu(\la) -
  \mu(\la)$ is exponentially small. Hence, the result is also true as stated,
  with $\mu$ instead of $\wh \mu$.
\end{proof}

Aside from the above functional central limit theorem, the Berry-Esseen
inequality, \eqref{BE}, still holds with $Z_{n, m, N}f$ replaced by $\wh Z_{n,
m, N}f$. From Theorem \ref{T:maxCLT}, we obtain the following consequences, with
proofs analogous to Corollaries \ref{C:CLT3} and \ref{C:CLT5}.

\begin{cor}\label{C:maxCLT}
  Let $W$ be a white noise on $Z$. Then
  \begin{equation*}
    \sqrt{nmN} \left(
      \wh Y_{n, m, N}f - X_{n, m}f - \frac 1 {2N} e^{X_{n, m}f}
    \right) \To \pi e^{Xf/2} W
  \end{equation*}
  in $\cD'(Z)$ whenever $n, m, N \to \infty$ in such a way that $nm/N^3 \to 0$.
  Also,
  \begin{equation*}
    \sqrt{nmN} \left(
      \wh Y_{n, m, N}f - X_{n, m}f - \frac 1 {2N} e^{X_{n, m}f}
      + \frac 7 {12N^2} e^{2 X_{n, m}f}
    \right) \To \pi e^{Xf/2} W
  \end{equation*}
  in $\cD'(Z)$ whenever $n, m, N \to \infty$ in such a way that $nm/N^5 \to 0$.
\end{cor}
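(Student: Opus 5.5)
The plan is to derive both limits from Theorem~\ref{T:maxCLT}, exactly as Corollaries~\ref{C:CLT3} and~\ref{C:CLT5} were derived from Theorem~\ref{T:CLT}. The first limit uses $a = 3$ and the second uses $a = 5$. In each case I rewrite $\wh Z_{n, m, N}f$ as $\sqrt{nmN}\,(\wh Y_{n, m, N}f - X_{n, m}f - c_N + R)$, where $c_N$ is the explicit centering in the statement. It then suffices to show $\|\sqrt{nmN}\,R\|_2^2 \le Cnm/N^a$.

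Once that bound holds, Cauchy--Schwarz gives $\ang{\sqrt{nmN}\,R, g} \to 0$ deterministically for each $g \in C_c^\infty(Z)$. Slutsky's theorem and L\'evy's continuity theorem, as in the proof of Theorem~\ref{T:CLT}, then transfer the convergence $\wh Z_{n, m, N}f \To \pi e^{Xf/2}W$ to the corrected process. Throughout, write $\la = Ne^{-X_{n, m}f}$, so that $1/\la = N^{-1}e^{X_{n, m}f}$ and $\la^{-2} = N^{-2}e^{2X_{n, m}f}$. Since $p_{j, k} \ge e^{-\|Xf\|_\infty}$, every $O(\la^{-k})$ term is bounded uniformly on $Z$ by $C/N^k$.

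\textbf{Term-by-term expansion.} The correction terms in \eqref{maxCLT} are $(-1)^r \mu_r(\la)/r$ divided by the $r$-th power of the normalizing denominator. I would match them with the functions $h_r$ used in the proof of Corollary~\ref{C:CLT5}, whose expansions are already recorded there:
\begin{equation*}
  h_2 = \la^{-1} - 2\la^{-2} + O(\la^{-3}), \quad
  h_3 = \la^{-2} + O(\la^{-3}), \quad
  h_4 = 3\la^{-2} + O(\la^{-3}), \quad
  h_5 = O(\la^{-3}).
\end{equation*}
With these, the subtracted sum equals $\tfrac12 h_2 - \tfrac13 h_3 + \tfrac14 h_4 - \tfrac15 h_5$.

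For $a = 3$ this is $\tfrac1{2\la} + O(\la^{-2})$. This gives the first display with the centering $-\tfrac1{2N}e^{X_{n, m}f}$ and remainder $\|\sqrt{nmN}\,R\|_2^2 \le CnmN/N^4 = Cnm/N^3$.

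For $a = 5$ the sum is $\tfrac1{2\la} + \bigl(-1 - \tfrac13 + \tfrac34\bigr)\la^{-2} + O(\la^{-3}) = \tfrac1{2\la} - \tfrac7{12}\la^{-2} + O(\la^{-3})$. This yields the centering $-\tfrac1{2N}e^{X_{n, m}f} + \tfrac7{12N^2}e^{2X_{n, m}f}$ and remainder $CnmN/N^6 = Cnm/N^5$. The algebra for this step is the regrouping already displayed in the proof of Corollary~\ref{C:CLT5}, with the signs reversed because here the sum is subtracted and there is no $b$-sum.

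\textbf{Main obstacle: the denominator.} The point needing the most care is the denominator in \eqref{maxCLT}, namely $(Ne^{-X_{n, m}f} + e^{-Np_{j, k}})^r$ rather than $(\la + 1)^r$. At order $\la^{-1}$ this makes no difference. At order $\la^{-2}$, however, the $r = 2$ term is sensitive to it: $\mu_2(\la)\,(\la^{-2} - (\la+1)^{-2}) \approx 2\la^{-2}$. So the stated coefficient $\tfrac7{12}$ depends on expanding with the same $h_r$ as in Corollary~\ref{C:CLT5}, i.e.\ on the $+1$-type shift that produced the $-2\la^{-2}$ in $h_2$.

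I would therefore first check carefully how the denominators arise from the Taylor expansion in Section~\ref{S:altNorm}, including the exponentially small $\ze$ and $\wh\mu_r - \mu_r$ corrections. The aim is to confirm that they match $h_2, \dots, h_5$ up to $O(\la^{-3})$. The exponentially small terms are harmless, since they are $O(N^{-k})$ for every $k$. The only genuine issue is the bookkeeping of the $O(\la^{-2})$ contribution of the $r = 2$ term, which is where I expect the argument to require the most attention.
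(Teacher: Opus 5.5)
\textbf{The second display fails.} Your argument for the first display is correct. It follows the route the paper indicates (``proofs analogous to Corollaries~\ref{C:CLT3} and~\ref{C:CLT5}''): with $a=3$ the correction is $\frac{1}{2\la}+O(\la^{-2})$, whatever $O(1)$ shift sits in the denominator.

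For the second display, the issue you flagged as the main obstacle is decisive, and the check you propose comes out negative. The denominators in \eqref{maxCLT} come from expanding the logarithm about $E[S\vee 1]=Np+e^{-Np}$. So $(\la+e^{-\la})^{-r}=\la^{-r}\bigl(1+O(e^{-\la})\bigr)$, and for $r=2$ this disagrees with $h_2=\la/(\la+1)^2$ at order $\la^{-2}$. The $r=2$ term is therefore $\frac{1}{2\la}$ up to an exponentially small error, with no $-\la^{-2}$ piece, and
\begin{equation*}
  \sum_{r=2}^{5}\frac{(-1)^r\mu_r(\la)}{r(\la+e^{-\la})^r}
  =\frac{1}{2\la}-\frac{1}{3\la^2}+\frac{3}{4\la^2}+O(\la^{-3})
  =\frac{1}{2\la}+\frac{5}{12\la^2}+O(\la^{-3}).
\end{equation*}
Theorem~\ref{T:maxCLT} with $a=5$ thus gives the centering $-\frac{1}{2N}e^{X_{n,m}f}-\frac{5}{12N^2}e^{2X_{n,m}f}$. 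The stated $+\frac{7}{12}$ is exactly what one gets by reusing the (N1) functions $h_2,\dots,h_5$. It is off by the $\frac12\cdot 2\la^{-2}=\la^{-2}$ you computed yourself.

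An independent check agrees:
\begin{equation*}
  E\log\bigl((S\vee1)/\la\bigr)
  =-\frac{\mu_2(\la)}{2\la^2}+\frac{\mu_3(\la)}{3\la^3}-\frac{\mu_4(\la)}{4\la^4}+O(\la^{-3})
  =-\frac{1}{2\la}-\frac{5}{12\la^2}+O(\la^{-3}).
\end{equation*}
The same computation with $S+1$ in place of $S\vee 1$ gives $\frac{1}{2\la}+\frac{1}{12\la^2}$ and reproduces \eqref{CLT5}.

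\textbf{The statement is false as written, so no proof can close the gap.} Let $Q$ denote the process in the second display. The expansion above gives $Q=\wh Z_{n,m,N}f+\sqrt{nm/N^3}\,e^{2X_{n,m}f}+\rho$, with $\rho$ deterministic and $\|\rho\|_2^2\le Cnm/N^5$. Take $g\in C_c^\infty(Z)$ with $g\ge0$ and $g\not\equiv0$; then $\ang{e^{2X_{n,m}f},g}\ge\int g\,d\nu>0$. The condition $nm/N^5\to0$ allows $nm/N^3\to\infty$ (for example $n=m=N^2$). In that regime $\ang{Q,g}\to\infty$ in probability, because $\ang{\wh Z_{n,m,N}f,g}$ converges in distribution.

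What your method does establish, once the denominators are expanded correctly, is the second display with $-\frac{5}{12N^2}e^{2X_{n,m}f}$ in place of $+\frac{7}{12N^2}e^{2X_{n,m}f}$. The same correction applies to the coefficient quoted in the introduction and to the (N3) version.
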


Now let $\wt S^{y_{j, k}}f$ have the same distribution as $S^{y_{j, k}}f$
conditioned to be positive, and define
\begin{equation*}
  \wt Y_{n, m, N}f = - \sum_{j, k} {
    \log \left( \frac{\wt S^{y_{j, k}}f} N \right) 1_{A_{j, k}}
  }.
\end{equation*}
We can construct $S^{y_{j, k}}f$ and $\wt S^{y_{j, k}}f$ in a coupled way so
that they are equal on the event $\{S^{y_{j, k}}f > 0\}$. For example, if $S_1,
S_2, \ldots$ are i.i.d.~$\Pois(Np_{j, k})$ and $\tau = \min\{i: S_i > 0\}$, then
$S_1 =_d S^{y_{j, k}}f$, $S_\tau =_d \wt S^{y_{j, k}}f$, and $S_1 = S_\tau$ on
$\{S_1 > 0\}$. Hence,
\begin{equation*}
  \wt Y_{n, m, N}f - \wh Y_{n, m, N}f = - \sum_{j, k} \left(
    \log \left( \frac{\wt S^{y_{j, k}}f} N \right)
    - \log \left( \frac 1 N \right)
  \right) 1_{\{S^{y_{j, k}}f = 0\}} 1_{A_{j, k}}.
\end{equation*}
Since $P(S^{y_{j, k}}f = 0)$ is exponentially small, $E\|\wt Y_{n, m, N}f - \wh
Y_{n, m, N}f\|_2^2$ is exponentially small. Hence, for any $r > 0$, there is a
constant $C$ such that $\|\wt Y_{n, m, N}f - \wh Y_{n, m, N}f\|_{L^2(S \times
\Om)} \le C/N^r$. It follows that if we define
\begin{equation*}
  \wt Z_{n, m, N}f = \sqrt{nmN} \, \bigg(
    \wt Y_{n, m, N}f - X_{n, m}f - \sum_{r = 2}^a \frac {
      (-1)^r \mu_r(N e^{-X_{n, m}f})
    } {
      r (N e^{-X_{n, m}f} + e^{-Np_{j, k}})^r
    }
  \bigg),
\end{equation*}
then both Theorem \ref{T:maxCLT} and Corollary \ref{C:maxCLT} still hold with
$\wh Y_{n, m, N}f$ replaced by $\wt Y_{n, m, N}f$ and $\wh Z_{n, m, N}f$
replaced by $\wt Z_{n, m, N}f$. We also have the law of large numbers, Theorem
\ref{T:LLN}, and the Berry-Esseen inequality, \eqref{BE}, with $Y_{n, m, N}f$
and $Z_{n, m, N}f$ replaced by $\wt Y_{n, m, N}f$ and $\wt Z_{n, m, N}f$,
respectively.

\bibliographystyle{plain}
\bibliography{tomography}
\addcontentsline{toc}{section}{References}

\end{document}